\crefname{hypothesis}{Hypothesis}{Hypotheses}
\title{Primal dual mixed finite element
  methods for indefinite advection--diffusion equations
  \thanks{
This work was funded by the EPSRC grant EP/P01576X/1.
}}
\author{Erik Burman
	\thanks{Department of Mathematics, 
	University College London, Gower Street, London, UK--WC1E  6BT, United Kingdom 
  	(\email{e.burman@ucl.ac.uk})}
\and 
	Cuiyu He
	\thanks{Department of Mathematics, University College London, Gower Street, London, 
	UK--WC1E  6BT, United Kingdom
  	(\email{c.he@ucl.ac.uk})
	}}
\newcommand{\bfn}{\boldsymbol n}
\newcommand{\bfx}{\boldsymbol x}
\newcommand{\bfw}{\boldsymbol w}
\newcommand{\bfe}{\boldsymbol e}
\newcommand{\bfp}{\boldsymbol p}
\newcommand{\bfq}{\boldsymbol q}
\newcommand{\bn}{\boldsymbol n}
\newcommand{\bfeta}{\boldsymbol \eta}
\newcommand{\bfbeta}{\boldsymbol \beta}
\newcommand{\bfR}{\boldsymbol R}
\newcommand{\piF}{\pi_{F,l}}
\newcommand{\tn}{|\mspace{-1mu}|\mspace{-1mu}|}
\numberwithin{equation}{section}
\newtheorem{lem}{Lemma}[section]
\newtheorem{prop}{Proposition}[section]
\newtheorem{thm}{Theorem}[section]
\newtheorem{rem}{Remark}[section]
\newtheorem{cor}{Corollary}[section]
\newtheorem{assumption}{Assumption}[section]
\newtheorem{example}{Example}[section]
\newcommand{\jump}[1]{[\![#1]\!]}
\newcommand{\tnorm}[1]{\tn#1\tn}
\newcommand{\cF}{ \mathcal{F}}
\newcommand{\divvr}{ \nabla \cdot}
\newcommand{\cT}{\mathcal{T}}
\begin{document}

\maketitle

\begin{abstract}
We consider primal-dual mixed finite element methods for
the advection--diffusion equation. For the primal variable we use
standard continuous finite element space and for the flux we use
the Raviart-Thomas space. We prove optimal a priori
error estimates in the energy- and the $L^2$-norms for the primal variable in the low Peclet
regime. 
In the high Peclet regime we also prove optimal
error estimates for the primal variable in the $H(div)$ norm for
smooth solutions. 
Numerically we observe that the method eliminates the
spurious oscillations close to interior layers that pollute the solution of the
standard Galerkin method when the local Peclet number is high.
This method, however, does produce spurious solutions when outflow
boundary layer presents. In the last section we propose two simple
strategies to remove such numerical artefacts caused  by the outflow
boundary layer and validate them numerically.
\end{abstract}

\begin{keywords}
Advection--Diffusion, Primal Dual Method, Mixed Finite Element Method
\end{keywords}

\begin{AMS}
  	65N30
\end{AMS}

\section{Introduction}
Advection--diffusion problems have been extensively studied in the last decades for its wide applications in the area of weather-forecasting, oceanography, gas dynamics, contaminant transportation in porous media, to name a few.
Many numerical methods for advection--diffusion equations have been
explored in the literature. The two main concerns when designing a numerical
method for advection--diffusion problems are robustness in the
advection dominated limit and satisfaction of local conservation. The standard
Galerkin method, using globally continuous approximation, is known to fail on both points and therefore
much effort has been devoted to the design of alternative formulations.
Typically to make the method stable in the
limit of dominating advection some stabilizing operator is introduced to provide sufficient
control of fine scale fluctuations. The most
well known stabilized method is the Streamline upwind Petrov
Galerkin method (SUPG) introduced by Hughes and
co-workers \cite{BH82} and first analyzed by Johnson and co-workers \cite{JNP84}. In order to
avoid disadvantages associated to the Petrov-Galerkin character, for
instance related to time discretization, the discontinuous Galerkin
method was introduced, first in the context of hyperbolic transport \cite{JP86,CS89}. In this case the stabilizing mechanism is due
to the upwind flux, which controls the solution jump over element
faces and adds a dissipation proportional to this jump. 
In the context
of finite element methods using $H^1$-conforming approximation several stabilized methods using
symmetric stabilization have been proposed, for instance the subgrid viscosity
method by Guermond \cite{Guer99}, the
orthogonal subscale method by Codina \cite{Cod00}, the continuous interior penalty method
(CIP), introduced by Douglas and Dupont \cite{DD76} and analyzed by Burman
and Hansbo \cite{BH04}. 
It is well known that for both cases of 
discontinuous and continuous approximation spaces a local conservative numerical flux can be defined. In the continuous case, however, it must be reconstructed using post processing
\cite{HEML00,BQS10}.

In this work, to ensure local conservation of the computed flux we design a method in the mixed setting: we approximate
the primal variable in the standard conforming finite element space and the flux in the
Raviart-Thomas space. The numerical scheme is based on a constrained
minimization problem in which the difference between the flux variable and the flux
evaluated using the primal variable is minimized under the constraint
of the conservation law. 
The method is very robust and was initially introduced for the
approximation of ill-posed problems, such as the elliptic Cauchy
problem, see \cite{BLO17}. Herein we consider well-posed, but possibly indefinite
advection--diffusion equations. However, the results extend to
ill-posed advection--diffusion equations using the ideas of
\cite{BLO17} and \cite{BON18}.

Indefinite, or noncoercive, elliptic problems with Neumann boundary
conditions were considered first in \cite{CF88} and more recently in 
\cite{CD11,KT12, Bu13} using finite volume and finite element methods. 
The method proposed herein is a mixed variant of the primal-dual stabilized
finite element method introduced in \cite{Bu13,Bu14a} for the respective
indefinite elliptic and hyperbolic problems, drawing on
earlier ideas on $H^{-1}$-least square methods from \cite{BLP97}. 
Contrary to those work we herein consider a
formulation where the approximation spaces are chosen so that it is
inf-sup stable. Hence no stabilizing terms are required.
Primal dual methods without stabilization were proposed for
the advection--diffusion problem in \cite{CEQ14} and for second order PDE in
\cite{BQ17, BJ18}, inspired by previous work on Petrov-Galerkin
methods \cite{DG10, DHSW12}.
Similar ideas have recently been exploited successfully in the context of weak
Galerkin methods for elliptic problems on non-divergence form
\cite{WW18b}, Fokker-Planck equations \cite{WW17}, and the ill-posed
elliptic Cauchy problem in \cite{WW18a}.
In \cite{LWZ17} a method was introduced which is reminiscent of the lowest order version of the method we propose
herein. The case of high Peclet number was, however, not considered in
\cite{LWZ17}, so our analysis is likely to be useful for the
understanding of the method in \cite{LWZ17} in this regime.

\subsection{Main results}
For the error analysis, in the low Peclet regime,
we prove optimal convergence orders for the $L^2$-  and
$H^1$- norms for the primal variable for all polynomial orders. For
the analysis we do not use coercivity, but only the stability of the solution, showing the interest of the method for indefinite (or
T-coercive \cite{Cia12}) problems. 
In the high Peclet regime we assume that the data of the adjoint
operator satisfy a certain positivity criterion, which is different to the
classical one for coercivity.
We then
prove an error estimate in negative norm and optimal order convergence of the error in the streamline
derivative of the primal variable measured in the $L^2$- norm, for
smooth solutions. 

Numerical results for both the diffusion- and advection-dominated
problems are presented. Optimal convergence is verified on smooth
problems and on a problem with reduced regularity due to a corner singularity.
We note that for problems with an internal layer only mild and localized oscillations 
are observed (see \cref{fig:osillation-free_2}). However, for problems
with under-resolved outflow boundary layers the effect of the layer
causes global pollution of the solution (see
\cref{fig:osillation}). In \cref{sec:outflow} we propose two simple
strategies to improve the method in this case. More specifically, one
method imposes the boundary condition weakly and the second approach
introduces a weighting of the stabilizer such that the oscillation is
more ``costly" closer to the inflow boundary. This latter variant introduces a notion of upwind direction.

This paper is organized as follows. In \cref{sec:problem}, the model problem is presented. The numerical scheme is proposed and its stability and continuity is analyzed in \cref{sec:mixed-method}. In \cref{sec:error-estimate}, we prove the error estimation results for both problems with either low or high Peclet numbers. Numerical results are presented in \cref{sec:numerical-results}. 
In \cref{sec:outflow} we propose two strategies to
  improve accuracy in the presence of underresolved outflow boundary layers. Numerical results are also presented to test their effectiveness.

\section{The Model Problem}\label{sec:problem}
Let $\Omega \in \mathbb{R}^d$, $d \in \{2,3\}$, be a polygonal/polyhedral domain,
with boundary $\partial \Omega$ and outward pointing unit normal $\bfn$. We consider the
following advection--diffusion equation,
\begin{equation}\label{CD_strong}
\nabla \cdot  ({\bf{\bfbeta}} u - A  \nabla u) + \mu u = f 
\end{equation}
with the boundary conditions
\begin{equation}\label{CD_indef}
\begin{array}{rcl}
u&=& g  \mbox{ on } \Gamma_D, \quad \mbox{and}\\
(A \nabla u - {\bf{\bfbeta}} u) \cdot \bfn & = & \psi \mbox{ on } \Gamma_N.
\end{array}
\end{equation}
where
  $\Gamma_D,\,\Gamma_N \subset \partial \Omega$, $\Gamma_D \cap \Gamma_N = \emptyset$ and $\bar \Gamma_D
\cup \bar \Gamma_N = \partial \Omega$. For simplicity, we assume that
$\Gamma_D \neq \emptyset$.
The problem data is given by $f \in
L^2(\Omega)$, $g \in
H^{\frac12}(\Gamma_D)$, $\psi \in
H^{-\frac12}(\Gamma_N)$, $A \in \mathbb{R}^{d\times d}$, $\mu \in
\mathbb{R}$ and $\bfbeta \in [L^\infty(\Omega)]^d$, with $\bfbeta_\infty := \|\bfbeta\|_{L^\infty(\Omega)}$. For the analysis in the
advection dominated case we will strengthen the assumptions on the parameters. Furthermore, we assume that
the matrix $A$ is symmetric positive definite.
With the smallest eigenvalue $\lambda_{min,A}>0$ and the largest
eigenvalue $\lambda_{\max,A}$ we assume that $\lambda_{\max,A}/\lambda
_{min,A}$ is bounded by a moderate constant. The below analysis holds
also in the case of variable $A$ and $\mu$, that are piecewise differentiable
on polyhedral subdomains, provided ajustements are made for loss of regularity
in the exact solution.

Let
\textcolor{black}{
\begin{equation}\label{W_V}
V_{g,D} \!=\! \{v \in H^1(\Omega): v=g \mbox{ on } \Gamma_D\}
\quad \mbox{and} \quad
V_{0,D}\!=\!\{v \in H^1(\Omega): v =0 \mbox{ on } \Gamma_D\}.
\end{equation}
}
Consider the weak form: find $u \in V_{g,D}$
 such that 
\begin{equation}\label{weak_CD}
a(u,v) = l(v), \quad \forall \, v \in V_{0,D},
\end{equation}
with
\[
a(u,v) := ( \mu u, v)_\Omega + (A \nabla u - \bfbeta u, \nabla v)_\Omega,
\]
and
\[
l(v) := (f,v)_\Omega  +  \left<\psi, v \right>_{\Gamma_N},
\]
where $(\cdot, \cdot)_w$ denotes the $L^2$ inner product on $w$. When
$w$ coincides with the domain $\Omega$ the subscript is omitted below.
We will only assume  that
the problem satisfies the Babuska-Lax-Milgram theorem \cite{babuska71}, which, in the case of homogenous Dirichlet condition, implies the
existence and uniqueness and 
the following stability estimate
\[
\|u\|_V \leq \alpha^{-1} \|l\|_{V'},
\]
where $\|\cdot\|_V$ is the $H^1$-norm, $\alpha$ is the constant of the inf-sup condition and 
the dual norm is defined by
\[
\|l\|_{V'} := \sup_{\substack{v \in V\\ \|v\|_V=1}} l(v).
\]
Observe that in the case of heterogenous Dirichlet condition we may
write $u = u_0+u_g$ where $u_0 \in V_{0,D}$ is unknwon and $u_g \in V_{g,D}$ is a
chosen lifting of the boundary data such that
$\|u_g\|_V \leq \|g\|_{H^{\frac12}(\Gamma_D)}$. In that case the
stability may be written as
\begin{equation}\label{Dirichlet_stab}
\|u_0\|_V \leq \alpha^{-1} \|l_g\|_{V'},
\end{equation}
where $l_g(v) = l(v) - a(u_g,v)$. 
\section{The Mixed Finite Element Framework}\label{sec:mixed-method}
\subsection{Some preliminary results}
Let $\{\mathcal{T}\}_h$ be a family of conforming, quasi uniform
triangulations of $\Omega$
consisting of shape regular simplices $\mathcal{T} = \{K\}$. The
diameter of a simplex $K$ will be denoted by $h_K$ and the family index
$h$ is the mesh parameter defined as the largest diameter of all
elements, i.e,  $h = \underset{K \in \mathcal{T}}{\max} \{h_K\}$.
We denote by $\mathcal{F}$  the set of all faces in $\mathcal{T}$,
by $\mathcal{F}_I$ the set of all interior faces in $\mathcal{T}$
and by $\cF_D$ and $\cF_N$ the sets of faces on
the respective $\Gamma_D$ and $\Gamma_N$.
For each $F \in \cF$ denote by $\bn_F$ a unit vector normal to $F$ and $\bn_F$ is fixed to be outer normal to $\partial \Omega$ when $F$ is a boundary face.

Frequently, we will use the notation $a \lesssim b$ meaning $a \leq C b$
where $C$ is a non-essential constant, independent of $h$. Significant
properties of the hidden constant will be highlighted.

We denote the standard $H^1$-conforming finite
element space of order k by
\[
V_h^k:=\{v_h \in H^1(\Omega) : v|_K \in \mathbb{P}_k(K), \quad \forall \, K \in \cT \}
\]
where $\mathbb{P}_k(K)$ denotes the set of polynomials of degree less
than or equal to $k$ in the simplex $K$. 
Let $i_h: C^0(\bar\Omega) \mapsto V_h^k$ be the nodal interpolation.
The following
approximation estimate is satisfied by $i_h$, see e.g., \cite{EG04}. For $v \in
H^{k+1}(\Omega)$ there holds
\begin{equation}\label{eq:approx_Lagrange}
\|v - i_h v\| + h \|\nabla (v - i_h v)\|  \lesssim
h^{k+1} |v|_{H^{k+1}(\Omega)}, \quad k\ge 1.
\end{equation}


For the primal variable we introduce the following spaces
\[
V_{g,D}^k:=\{v_h \in V_h^k\, : \,v_h = g_h \mbox{ on } \Gamma_D \}
\quad \mbox{and} \quad
 V_{0,D}^k:=\{v_h \in V_h^k\, : \,v_h = 0 \mbox{ on } \Gamma_D \},
\]
where $g_h$ is the nodal interpolation of $g$ (or if $g$ has
insufficient smoothness, some other optimal approximation of $g$) on the trace of
$\Gamma_D$ so that $g_h$ is piecewise polynomial of order $k$ with respect to $\cF_D$.

For the flux variable we use the Raviart-Thomas space
\[
RT^l:= \{\bfq_h \in H_{div}(\Omega)\, : \, \bfq_h\vert_K \in
\mathbb{P}_l(K)^d\oplus \bfx (\mathbb{P}_{l}(K) \setminus
\mathbb{P}_{l-1}(K)), \; \forall \,K \in \mathcal{T} \},
\]
with $\bfx \in \mathbb{R}^d$ being the spatial variable, $l\ge 0$ and
$\mathbb{P}_{-1}(K) \equiv \emptyset$.
We recall the Raviart-Thomas interpolant
$\bfR_h:H^1(div,\Omega) \mapsto RT^l$, where
\[
H^m(div,\Omega):= \left\{\bfw \in [H^m(\Omega)]^d \,:\, \nabla \cdot \bfw \in
H^m(\Omega) \right\},
\]
 and its approximation
  properties \cite{EG04}.  For \textcolor{black}{$\bfq \in
H^m(div,\Omega)$, $m \ge 1$ and $\bfR_h \bfq \in RT^{l}$, there holds
\begin{equation}\label{eq:approx_RT}
\|\bfq  - \bfR_h \bfq\|_\Omega + \|\nabla \cdot (\bfq  - \bfR_h
\bfq)\|_\Omega \lesssim h^{r} (|\nabla \cdot \bfq |_{H^{r}(\Omega)}+|\bfq |_{H^{r}(\Omega)})
\end{equation}
where $r=\min(m,l+1)$}.

We also introduce the
$L^2$-projection on the face $F$ of some simplex $K \in \mathcal{T}$, 
\[ \piF:L^2(F) \mapsto \mathbb{P}_{l}(F)\] 
such that for any $\phi \in L^2(F)$ 
\[
\left<\phi- \piF (\phi), p_h \right>_F = 0,\quad \forall \, p_h \in \mathbb{P}_l(F).
\]
Then by assuming that
the Neumann data $ \psi$ is in $L^2(\Gamma_N)$ we define the discretized Neumann boundary data by its
$L^2$-projection such that for each $F \in \cF_N$ we have $ \psi_h\vert_F := \piF(\psi)$.
With the
satisfaction of the Neumann condition built in, we define
\[
RT_{\psi,N}^l=\{\bfq_h \in RT^{l} : \bfq_h \cdot \bfn = 
  -\psi_h \mbox{ on } \Gamma_N \}
  \]
  and
  \[
RT_{0,N}^l=\{\bfq_h \in RT^{l} : \bfq_h \cdot \bfn = 0 \mbox{ on } \Gamma_N \}.
\]

For the Lagrange multiplier variable, we introduce the space of
functions in $L^2(\Omega)$ that are piecewise polynomial of order $m$ in each
element by
\[
X^m_h := \{x_h \in L^2(\Omega): x_h\vert_K \in \mathbb{P}_m(K), \quad \forall \,K \in \mathcal{T}\}.
\]
We define the $L^2$-projection $\pi_{X,m}:L^2(\Omega) \mapsto X_h^m$ 
such that
\[
(y - \pi_{X,m}(y) , x_h) = 0,	\quad \forall  \,x_h \in X_h^m.
\]
For functions in $X^m_h$ we define the broken norms,
\begin{equation}\label{eq:discrete_H1}
\|x\|_h := \left(\sum_{K \in \mathcal{T}} \|x\|_K^2\right)^{\frac12}\mbox{
    and } \|x\|_{1,h} := \left(\|\nabla x\|^2_h +
  \|h^{-\frac12}\jump{x}\|^2_{\cF_I \cup \cF_D} \right)^{\frac12}
\end{equation}
where $\|h^{-1/2}x\|^2_{\mathcal{F}_I \cup \cF_D}:= \sum \limits_{F
  \in \mathcal{F}_I \cup \cF_D} h_F^{-1}\|x\|_F^2$ and
\[
\jump{x}\vert_F(z) := \left\{ \begin{array}{ll}
\lim\limits_{\epsilon
  \rightarrow 0^+} (x(z - \epsilon \bfn_F) - x(z + \epsilon \bfn_F))&
                             \mbox{ for } F \in \mathcal{F}_I,\\
x(z)& \mbox{ for } F \in \cF_D\cup \cF_N.
\end{array}
\right. 
\]
Also recall the discrete Poincar\'e inequality \cite{Brenner:03},

\begin{equation}\label{eq:disc_poincare}
\|x\|\lesssim \|x\|_{1,h}, \quad \forall \,x \in X_h^m,
\end{equation}
which guarantees that $\|\cdot\|_{1,h}$ is a norm. 

Given a function $x_h \in X_h^m$ we define a reconstruction,  $\bfeta_h(x_h) \in RT^l_{0,N}$,
of the gradient of $x_h$
such that for all $F \in \mathcal{F}_I \cup \cF_D$ 
\begin{equation}\label{eq:reconstruct1}
\left<\bfeta_h(x_h) \cdot \bfn_F,  p_h \right>_F= \left< h_F^{-1} \jump{x_h},
p_h \right>_F, \quad \forall \, p_h \in \mathbb{P}_{l}(F),
\end{equation}
where $h_F$ is the diameter of $F$, and if $l \ge 1$, for all $K \in \mathcal{T}$,
\begin{equation}\label{eq:reconstruct2}
( \bfeta_h(x_h) ,\bfq_h)_K = -(\nabla x_h,
\bfq_h )_K,  \quad \forall \, \bfq_h \in [\mathbb{P}_{l-1}(K)]^d.
\end{equation}

We prove the stability of $\bfeta_h$ with respect to the data in the following proposition.
\begin{prop}
There exists an unique $\bfeta_h \in RT_{0,N}^l$ such that
\cref{eq:reconstruct1}--\cref{eq:reconstruct2} hold.
Moreover $\bfeta_h$ satisfies the following stability estimate
\begin{equation}\label{eq:etastab}
\|\bfeta_h\| \leq C_{ds} \left(\|\pi_{X,l-1} \nabla x_h\|^2_{h} +
\|h^{-\frac12} \piF (\jump{x_h})\|^2_{\mathcal{F}_I \cup \cF_D}\right)^{\frac12},
\end{equation}
here $C_{ds}>0$ is a constant depending only on the element
shape regularity. 
\end{prop}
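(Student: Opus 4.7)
The plan is to handle existence and uniqueness via unisolvence of the Raviart--Thomas degrees of freedom, and then to prove the stability bound by a local scaling argument on the reference element.

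For existence and uniqueness, I would proceed element by element. For each $K\in \mathcal{T}$, identity \eqref{eq:reconstruct1} prescribes the moments of $\bfeta_h\cdot\bfn_F$ against $\mathbb{P}_l(F)$ on every face $F\subset\partial K$ with $F\in\mathcal{F}_I\cup\mathcal{F}_D$; for $F\in\mathcal{F}_N$, the normal trace vanishes by the definition of $RT_{0,N}^l$. Since $\bfeta_h\cdot\bfn_F\in\mathbb{P}_l(F)$, these relations actually pin down $\bfeta_h\cdot\bfn_F$ pointwise as $\bfeta_h\cdot\bfn_F = h_F^{-1}\piF(\jump{x_h})$ on each such $F$. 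For $l\ge 1$, equation \eqref{eq:reconstruct2} prescribes the interior moments against $[\mathbb{P}_{l-1}(K)]^d$. Together these furnish the standard unisolvent set of DOFs for $RT^l(K)$, so a unique $\bfeta_h|_K$ exists. Global $H(\mathrm{div})$-conformity follows because the prescribed normal-trace value on any shared interior face is single-valued (it depends only on $h_F^{-1}\jump{x_h}$, whose sign is fixed by the orientation convention on $\bfn_F$).

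For the stability estimate \eqref{eq:etastab}, I would argue locally. Testing \eqref{eq:reconstruct2} with $\bfq_h=\pi_{K,l-1}\bfeta_h\in[\mathbb{P}_{l-1}(K)]^d$, where $\pi_{K,l-1}$ denotes the elementwise $L^2$-projection, and using that the right-hand side sees $\nabla x_h$ only through its projection onto polynomials of degree $l-1$, yields $\|\pi_{K,l-1}\bfeta_h\|_K \le \|\pi_{X,l-1}\nabla x_h\|_K$. To control the complementary part I would invoke a reference-element argument: on the fixed reference simplex $\hat K$, $RT^l(\hat K)$ is finite dimensional and the map sending $\hat\bfeta$ to its face normal traces together with its interior $L^2$-projection onto $[\mathbb{P}_{l-1}(\hat K)]^d$ is a linear isomorphism, so norm equivalence delivers
\[
\|\hat\bfeta\|_{\hat K}^2 \lesssim \sum_{\hat F\subset\partial\hat K}\|\hat\bfeta\cdot\hat\bfn_{\hat F}\|_{\hat F}^2 + \|\hat\Pi_{l-1}\hat\bfeta\|_{\hat K}^2.
\]
Pulling back via the Piola transform and using shape-regularity to identify $h_F$ and $h_K$ up to constants, the face contribution scales as $h_F\|\bfeta_h\cdot\bfn_F\|_F^2 = h_F^{-1}\|\piF(\jump{x_h})\|_F^2$ after inserting the pointwise formula, while the interior projection term is bounded by $\|\pi_{K,l-1}\bfeta_h\|_K^2 \le \|\pi_{X,l-1}\nabla x_h\|_K^2$. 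Summing over $K$ and noting that each face is shared by at most two elements yields \eqref{eq:etastab} with $C_{ds}$ depending only on shape regularity.

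The main obstacle is the careful bookkeeping of $h$-powers in the Piola pullback, since volumetric and face $L^2$-norms scale with different powers of $h$; the $h_F^{-1}$ factor inside the face data has to combine with the Piola scalings so that the face contribution emerges exactly as $\|h^{-1/2}\piF(\jump{x_h})\|_{\mathcal{F}_I\cup\mathcal{F}_D}^2$. This works out cleanly thanks to shape-regularity together with the fact that the normal trace of an element of $RT^l$ is a polynomial of degree exactly $l$ on each face, which is what permits replacing $\jump{x_h}$ by its projection $\piF(\jump{x_h})$ without loss.
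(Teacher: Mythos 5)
Your argument is correct. Note, however, that the paper does not actually prove this proposition: it simply defers to the reference \cite{BLO17}, so there is no in-paper proof to compare against. Your route --- identifying \cref{eq:reconstruct1}--\cref{eq:reconstruct2} (together with the vanishing normal trace on $\Gamma_N$ built into $RT^l_{0,N}$) with the canonical unisolvent degrees of freedom of the Raviart--Thomas element, reading off $\bfeta_h\cdot\bfn_F = h_F^{-1}\piF(\jump{x_h})$ pointwise, bounding the interior part by testing with $\pi_{K,l-1}\bfeta_h$, and closing with a reference-element norm equivalence plus Piola scaling --- is the standard one, and the $h$-power bookkeeping works out exactly as you indicate ($h_F\|\bfeta_h\cdot\bfn_F\|_F^2 = h_F^{-1}\|\piF(\jump{x_h})\|_F^2$, with each interior face counted at most twice in the sum over elements). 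Two cosmetic points: the normal trace of an $RT^l$ field lies in $\mathbb{P}_l(F)$ (degree \emph{at most} $l$, not exactly $l$), and for $l=0$ the interior condition is vacuous so the volume term in \cref{eq:etastab} degenerates to zero, which your argument handles implicitly but is worth stating. The value of your write-up is that it makes the proposition self-contained where the paper relies on an external citation.
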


\begin{proof}
We refer to \cite{BLO17} for the proof.
\end{proof}


We will also frequently use the following inverse and trace
inequalities, 
\begin{equation}\label{eq:inverse}
\|\nabla v\|_K \lesssim h^{-1} \|v\|_K, \quad \forall \, v \in \mathbb{P}_k(K)
\end{equation}
and 
\begin{equation}\label{eq:trace}
\|v\|_{\partial K} \lesssim h^{-\frac12} \|v\|_K + h^{\frac12}\|\nabla v\|_K
, \quad \forall \, v \in H^1(K).
\end{equation}
For a proof of \cref{eq:inverse} we refer to Ciarlet \cite{CIARLET:78}, and
for \cref{eq:trace} see, e.g., Monk and S\"uli \cite{MS99}.

\subsection{The finite element method}
The problem takes the form of finding the critical point of a
Lagrangian
$\mathcal{L}: (v_h, \bfq_h, x_h) \in V_{g,D}^k \times RT_{ \psi,N}^l \times X_h^m \mapsto \mathbb{R}$ defined by
\begin{equation}\label{eq:lagrangian}
\mathcal{L}[v_h,\bfq_h,x_h]:= \frac12 s[(v_h,\bfq_h),(v_h,\bfq_h)] + b(\bfq_h, v_h,x_h)-(f,x_h).
\end{equation}
Here $x_h \in X_h^m$ is the Lagrange multiplier, $s(\cdot,\cdot)$
denotes the primal stabilizer
\begin{equation}\label{eq:primal_stab}
s[(v,\bfq),(v,\bfq)]:=  \frac12\|{\bf \bfbeta} v - A \nabla v -
\bfq\|^2,
\end{equation}
 and $b(\cdot,\cdot)$ is the
bilinear form defining the partial differential equation, in our case
the conservation law,
\[
b(\bfq_h,v_h,x_h):= (\nabla \cdot \bfq_h + \mu v_h, x_h).
\]

By computing the Euler-Lagrange equations of \cref{eq:lagrangian}
we obtain the following linear system: find 
$(u_h, \bfp_{h}, z_h) \in  V_{g,D}^k \times RT_{ \psi,N}^l \times X_h^m$ such that
\begin{align}\label{eq:EL_1}
s[(u_h,\bfp_h),(v_h,\bfq_h)]+b(\bfq_h, v_h,z_h) & = 0, \\
b(\bfp_h, u_h,x_h) - ( f,x_h)  & = 0\label{eq:EL_2},
\end{align}
for all $(v_h, \bfq_h ,x_h) \in  V_{0,D}^k \times RT_{0,N}^l \times X_h^m$.
The system \cref{eq:EL_1}--\cref{eq:EL_2} is of the
same form as that proposed in \cite{Bu14b,Bu16} but without the adjoint stabilization. 
Therefore, to ensure that the system is well-posed the spaces $V_h^k \times
RT^l \times X_h^m$ must be carefully balanced. Herein we will restrict the discussion to
the equal order case $k=l=m$ that is stable without further
stabilization. The arguments can be extended to other choices of
spaces provided suitable extra stabilizing terms are added (see \cite{BLO17} for details).

Observe that the stabilizer in equation \cref{eq:primal_stab} connects the flux and the primal variables and, more precisely, brings
$\bfp_h$ and ${\bf{\bfbeta}} u_h - A\nabla u_h$ to be close.
In the low Peclet regime this introduces an
effect similar to the penalty on the gradient of the primal
variable used in \cite{Bu13}. In the high Peclet regime, on the other
hand, the stability of the streamline derivative is obtained by the
strong control of the conservation law residual obtained through
equation \cref{eq:EL_2}.
\begin{remark}
The constrained-minimization problem introduces an auxiliary
variable, i.e., the Lagrange multiplier, which for stability reasons
must be chosen as the discontinuous counterpart of the discretization
space for the primal variable (unless stabilization is applied, see \cite{BLO17}). This results in a system with a
substantially larger number of degrees of freedom than the standard
Galerkin and the classical mixed method. Nevertheless, it is
possible to
reduce the system used in the iterative solver to a positive definite
symmetric matrix where the Lagrange multiplier has been
eliminated. This is achieved by
iterating on a least square formulation and the solution of which is not
locally mass conservative but has similar approximation
properties. The number of degrees of freedom of the reduced system is comparable  to that of the
mixed method using the Raviart-Thomas element. For a detailed discussion of
this approach we refer to \cite{BLO17}. 
\end{remark}
\subsection{Approximation, continuity and inf-sup condition}
For the analysis we introduce the energy norms on
$H^1(\Omega) \times H(div, \Omega)$,
\begin{align}
\tnorm{(v,\bfq)}_{-1} &:= \left(
s[(v,\bfq),(v,\bfq)] + \|h(\nabla \cdot
\bfq+ \mu v)\|^2 \right)^{\frac12}, \label{eq:tnorm}
\\
\tnorm{(v,\bfq)}_{\sharp} &:=\tnorm{(v,\bfq)}_{-1}+\|\mu
                            v\|+\|h^{\frac12}
                            \bfq\|_{\mathcal{F}}+\|\bfq\|_{\Omega}. \label{eq:cont_norm}
\end{align}
To quantify the dpendence of the physical parameters in the bounds
below we introduce the factor $c_u :=   \bfbeta_{\infty} h + \|A\|_{\infty} + |\mu|h $.
\begin{lem} [Approximation]\label{lem:approx}
For any $v \in H^{k+1}(\Omega)$ and $\bfq \in H^{l+1}(\Omega)^d$ the following 
approximation properties hold:
\begin{equation}\label{eq:sharpnorm_approx}
\tnorm{(v - i_h v,\bfq - R_h \bfq)}_{-1}  \le \tnorm{(v - i_h
  v,\bfq - R_h \bfq)}_{\sharp} \lesssim
  c_u h^k|v|_{H^{k+1}(\Omega)}+ h^{l+1} |\bfq|_{H^{l+1}(\Omega)}.
  \end{equation}
\end{lem}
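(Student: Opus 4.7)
The plan is to bound each of the four pieces of the $\sharp$-norm of the interpolation error $(\eta, \bftheta) := (v-i_h v,\,\bfq - R_h \bfq)$ separately and then sum. Throughout I will use only the nodal/Raviart--Thomas approximation estimates \cref{eq:approx_Lagrange}--\cref{eq:approx_RT} together with the trace inequality \cref{eq:trace}; everything reduces to algebraic bookkeeping of the physical constants so as to package them into $c_u = \bfbeta_\infty h + \|A\|_\infty + |\mu| h$.

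First I would handle the stabilizer term. Applying the triangle inequality inside $s[(\eta,\bftheta),(\eta,\bftheta)]^{1/2} = \tfrac{1}{\sqrt{2}}\|\bfbeta \eta - A\nabla \eta - \bftheta\|$ gives three pieces: $\bfbeta_\infty \|\eta\|$, $\|A\|_\infty \|\nabla \eta\|$, and $\|\bftheta\|$. Using \cref{eq:approx_Lagrange} on the first two and \cref{eq:approx_RT} with $r=l+1$ on the third, I obtain
\[
\bfbeta_\infty h^{k+1}|v|_{H^{k+1}} + \|A\|_\infty h^k |v|_{H^{k+1}} + h^{l+1}|\bfq|_{H^{l+1}},
\]
and since $\bfbeta_\infty h^{k+1} + \|A\|_\infty h^k \le c_u h^k$, this is of the claimed form. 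Next I would bound $\|h(\nabla\!\cdot\!\bftheta + \mu\eta)\| \le h\|\nabla\!\cdot\!\bftheta\| + |\mu| h \|\eta\|$, using \cref{eq:approx_RT} for the divergence and \cref{eq:approx_Lagrange} for $\eta$; note $|\mu| h\, \|\eta\| \lesssim |\mu| h \cdot h^{k+1}|v|_{H^{k+1}} \le c_u h^k |v|_{H^{k+1}}$, and the divergence term is of even higher order than required.

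For the remaining pieces of the $\sharp$-norm: $\|\mu\eta\| \le |\mu|\,h^{k+1}|v|_{H^{k+1}} \le c_u h^k |v|_{H^{k+1}}$ directly by \cref{eq:approx_Lagrange}; $\|\bftheta\|_\Omega$ is given immediately by \cref{eq:approx_RT}; and for $\|h^{1/2}\bftheta\|_{\mathcal F}$ I would apply the trace inequality \cref{eq:trace} elementwise to obtain
\[
h_F\|\bftheta\|_F^2 \lesssim \|\bftheta\|_K^2 + h^2 \|\nabla \bftheta\|_K^2,
\]
sum over faces, and close using the elementwise Raviart--Thomas approximation bounds $\|\bftheta\|_K + h\|\nabla \bftheta\|_K \lesssim h^{l+1}|\bfq|_{H^{l+1}(K)}$ (the standard local companion of \cref{eq:approx_RT}).

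The first inequality $\tnorm{(\eta,\bftheta)}_{-1}\le \tnorm{(\eta,\bftheta)}_\sharp$ is immediate from the definitions \cref{eq:tnorm}--\cref{eq:cont_norm} since the added terms are nonnegative, so the result follows by collecting all the above bounds. There is no real obstacle here; the only point that needs care is the separation of the $v$-contributions (which pick up the factor $c_u$ through $\bfbeta$, $A$ and $\mu$) from the $\bfq$-contributions (which appear with the clean order $h^{l+1}$), and verifying that every prefactor of the form $\bfbeta_\infty h^{k+1}$, $\|A\|_\infty h^k$, or $|\mu| h^{k+1}$ is absorbed into $c_u h^k$ as claimed.
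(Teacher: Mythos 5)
Your proposal is correct and follows essentially the same route as the paper: triangle inequality plus the interpolation estimates \cref{eq:approx_Lagrange}--\cref{eq:approx_RT} for the volume terms, and the elementwise trace inequality \cref{eq:trace} combined with the local Raviart--Thomas bounds for the face term $\|h^{1/2}(\bfq - R_h\bfq)\|_{\mathcal F}$. The only difference is that you itemize the bookkeeping of the prefactors into $c_u$ more explicitly than the paper does, which is harmless.
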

\begin{proof}
Applying the triangle inequality and the approximation properties \cref{eq:approx_Lagrange} and
\cref{eq:approx_RT} gives
\begin{equation}\label{eq:tnorm_approx}
\begin{split}
	\tnorm{(v - i_h
  v,\bfq - R_h \bfq)}_{-1} 
  \lesssim&
     \left(\bfbeta_{\infty} h + \|A\|_{\infty} + |\mu| h^{2} \right) h^k |v|_{H^{k+1}(\Omega)}
     +
  h^{l+1} |\bfq|_{H^{l+1}(\Omega)}.
\end{split}
\end{equation}
To estimate the remaining terms note that the trace inequality \cref{eq:trace} implies
\[
	\|h^{1/2} (\bfq - R_h \bfq)\|_\mathcal{F} \lesssim
	\|\bfq - R_h \bfq\| + h \|\nabla(\bfq - R_h \bfq)\|,
\]
which, combining with the approximation properties, 
gives
\[
	\|\mu (v - i_h v)\|+\|h^{\frac12} (\bfq -R_h\bfq) \|_{\mathcal{F}}+\|\bfq-R_h\bfq\|
	\lesssim
	| \mu| h^{k+1} |v|_{H^{k+1}(\Omega)} + h^{l+1} | \bfq|_{H^{l+1}(\Omega)}.
\]
\cref{eq:sharpnorm_approx} is then a direct consequence of the above inequality and \cref{eq:tnorm_approx}.
This completes the proof of the lemma.
\end{proof}

To facilitate the analysis we rewrite the system \cref{eq:EL_1}--\cref{eq:EL_2} in the following compact form:
 finding $(u_h,\bfp_h,z_h) \in V_{g,D}^k \times
RT_{\psi,N}^l \times X_h^m$ such that
\begin{equation}\label{eq:compact}
\mathcal{A}[(u_h,\bfp_h,z_h),(v_h,\bfq_h,x_h)] = l_h(x_h),\quad  \forall \,
(v_h,\bfq_h,x_h) \in V_{0,D}^k \!\times\! RT_{0,N}^l \!\times\! X_h^m,
\end{equation}
where
$$
\mathcal{A}[(u_h,\bfp_h,z_h),(v_h,\bfq_h,x_h)] = b(\bfq_h, v_h,z_h)+
b(\bfp_h, u_h,x_h)+s[(u_h,\bfp_h),(v_h,\bfq_h)],
$$
and
\[
 l_h(x_h) = ( f, x_h).
\]
Note that for the exact solution, $(u,\bfp)$, there
holds 
\begin{equation}\label{eq:consist_full}
\mathcal{A}[(u,\bfp,0),(v_h,\bfq_h,x_h)] = l(x_h).
\end{equation}
\begin{prop}[Inf-sup Condition]\label{prop:infsup}
 Let $k= l = m$ in \cref{eq:compact}.  Then there
exists $\alpha_c>0$ such that,
for all $(v_h,\bfq_h,x_h) \in V_{0,D}^k \times RT^k_{0,N} \times X_h^k$, there
exists $(\tilde v_h,\tilde \bfq_h, \tilde x_h) \in V_{0,D}^k \times RT^k_{0,N} \times
X_h^k$ satisfying
\begin{equation}\label{eq:is1}
\alpha_c (\tnorm{(v_h,\bfq_h)}_{-1}^2 + \|x_h\|_{1,h}^2) \leq \mathcal{A}[(v_h,\bfq_h,x_h),(\tilde v_h,\tilde \bfq_h, \tilde x_h)]
\end{equation}
and
\begin{equation}\label{eq:is2}
\tnorm{(\tilde v_h,\tilde \bfq_h)}_{-1} + \|\tilde x_h\|_{1,h} \lesssim \tnorm{(v_h,\bfq_h)}_{-1} + \|x_h\|_{1,h}.
\end{equation}
\end{prop}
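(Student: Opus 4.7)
My plan is to construct a composite test function tailored to each of the three contributions on the left-hand side of \cref{eq:is1}. Concretely, for a given $(v_h,\bfq_h,x_h) \in V^k_{0,D} \times RT^k_{0,N} \times X_h^k$ I would take
\[
(\tilde v_h,\tilde \bfq_h,\tilde x_h) := \bigl(v_h,\; \bfq_h + \xi\,\bfeta_h(x_h),\; -x_h + \eta\, h^2(\nabla\!\cdot\!\bfq_h + \mu v_h)\bigr),
\]
where $\xi>0$ is a small constant depending only on $C_{ds}$, $\eta>0$ is any fixed constant of order one, and $\bfeta_h(x_h)$ is the reconstruction from \cref{eq:reconstruct1}--\cref{eq:reconstruct2}. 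All three components lie in the required discrete spaces: the boundary data cancel since $v_h\in V^k_{0,D}$ and $\bfeta_h(x_h)\in RT^k_{0,N}$ by construction, while the weighted bubble $h^2(\nabla\!\cdot\!\bfq_h+\mu v_h)|_K$ is piecewise polynomial of degree $\le k = m$.

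Plugging the composite test into $\mathcal{A}$, the piece with $(\tilde v_h,\tilde x_h)=(v_h,-x_h)$ cancels the two $b$-cross terms that couple $x_h$ to $(v_h,\bfq_h)$, leaving the stabilizer $s[(v_h,\bfq_h),(v_h,\bfq_h)]$; the multiplier bubble produces $\eta\|h(\nabla\!\cdot\!\bfq_h+\mu v_h)\|^2$; and the flux perturbation yields $\xi(\nabla\!\cdot\!\bfeta_h(x_h),x_h)$ together with a spurious cross term in $s$. The central algebraic identity I need is $(\nabla\!\cdot\!\bfeta_h(x_h),x_h)=\|x_h\|_{1,h}^2$. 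To see it I would integrate by parts element by element: the face contributions collapse, via \cref{eq:reconstruct1} applied to $\jump{x_h}\in\mathbb{P}_l(F)$, to $\sum_{F\in\mathcal{F}_I\cup\cF_D}h_F^{-1}\|\jump{x_h}\|_F^2$, while the volume contribution $-\sum_K(\bfeta_h(x_h),\nabla x_h)_K$ reduces, via \cref{eq:reconstruct2}, to $\|\nabla x_h\|_h^2$, because in the equal-order case $k=l=m$ the gradient $\nabla x_h|_K$ already lies in $\mathbb{P}_{l-1}(K)^d$. The hard part will be the spurious stabilizer contribution $-\tfrac{\xi}{2}(\bfbeta v_h-A\nabla v_h-\bfq_h,\bfeta_h(x_h))$. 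I would control it by Cauchy--Schwarz followed by the stability estimate \cref{eq:etastab}, which (using that $\pi_{X,l-1}\nabla x_h=\nabla x_h$ and $\piF\jump{x_h}=\jump{x_h}$ in the equal-order case) gives $\|\bfeta_h(x_h)\|\le C_{ds}\|x_h\|_{1,h}$, and then Young's inequality. Choosing $\xi$ small enough as a function of $C_{ds}$ absorbs a fixed fraction of $s[(v_h,\bfq_h),(v_h,\bfq_h)]$ and of $\|x_h\|_{1,h}^2$, leaving $\mathcal A[(v_h,\bfq_h,x_h),(\tilde v_h,\tilde\bfq_h,\tilde x_h)] \ge \alpha_c\bigl(\tnorm{(v_h,\bfq_h)}_{-1}^2 + \|x_h\|_{1,h}^2\bigr)$.

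It remains to establish \cref{eq:is2}. For $\tnorm{(\tilde v_h,\tilde\bfq_h)}_{-1}$, splitting $\tilde \bfq_h=\bfq_h+\xi\bfeta_h(x_h)$ and using $(a+b)^2\le 2a^2+2b^2$, together with $\|\bfeta_h(x_h)\|\le C_{ds}\|x_h\|_{1,h}$ and the elementwise inverse inequality $\|h\,\nabla\!\cdot\!\bfeta_h(x_h)\|\lesssim\|\bfeta_h(x_h)\|$, gives $\tnorm{(\tilde v_h,\tilde\bfq_h)}_{-1}\lesssim \tnorm{(v_h,\bfq_h)}_{-1}+\|x_h\|_{1,h}$. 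For the multiplier, applying elementwise inverse and trace inequalities \cref{eq:inverse}--\cref{eq:trace} to the weighted bubble shows $\|h^2(\nabla\!\cdot\!\bfq_h+\mu v_h)\|_{1,h}\lesssim \|h(\nabla\!\cdot\!\bfq_h+\mu v_h)\|$, so $\|\tilde x_h\|_{1,h}\lesssim \|x_h\|_{1,h}+\tnorm{(v_h,\bfq_h)}_{-1}$. Combining these two bounds yields \cref{eq:is2} with constants independent of $h$ and of the physical data, completing the proposition.
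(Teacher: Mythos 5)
Your proposal is correct and follows essentially the same route as the paper: the same test triple $(v_h,\,\bfq_h+\epsilon\,\bfeta_h(x_h),\,-x_h+h^2(\nabla\cdot\bfq_h+\mu v_h))$, the same key identity $(\nabla\cdot\bfeta_h(x_h),x_h)=\|x_h\|_{1,h}^2$ obtained by elementwise integration by parts together with \cref{eq:reconstruct1}--\cref{eq:reconstruct2}, the same absorption of the cross term via \cref{eq:etastab} and Young's inequality with $\epsilon$ small depending only on $C_{ds}$, and the same triangle/inverse/trace argument for \cref{eq:is2}. The only cosmetic difference is your extra scaling parameter $\eta$ on the multiplier bubble, which the paper simply fixes to one.
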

\begin{proof}
Define $\bfeta_h = \bfeta_h(x_h) \in RT_{0,N}^{k}$ by taking $l=m=k$ in \cref{eq:reconstruct1}--\cref{eq:reconstruct2} and $\xi_h :=h^2 (\nabla
\cdot \bfq_h + \mu v_h) \in X_h^k$.
We claim that, by choosing $ \tilde v_h = v_h \in V_{0,D}^k,\, \tilde \bfq_h  = \bfq_h+
\epsilon \bfeta_h \in RT_{0,N}^k$ and $\tilde x_h = -x_h+ \xi_h \in X_h^k$,
 there holds \cref{eq:is1} and \cref{eq:is2},
where $\epsilon$ is to be determined later.

By the above definitions, we have
\begin{equation}  \label{stability_1}
\begin{split}
	&\mathcal{A}[(v_h,\bfq_h,x_h),(v_h,\bfq_h+ \epsilon
        \bfeta_h,-x_h+ \xi_h)]
\\ 
=&\,  \|\bfq_h - \bfbeta v_h + A \nabla
        v_h\|^2   + \| h ( \nabla \cdot \bfq_h+ \mu v_h) \|^2\\
&+  
	\epsilon(\bfq_h - \bfbeta v_h + A \nabla v_h,  \bfeta_h )
	+	\epsilon( \nabla \cdot  \bfeta_h , x_h).
\end{split}
\end{equation} 
For the last term, it follows from integration by parts, \cref{eq:reconstruct1}, \cref{eq:reconstruct2}
and the facts that $\bfeta_h \cdot \bfn = 0$ on $\Gamma_N$, $\nabla x_h|_K \in \mathbb{P}_{k-1}(K)^d$ and ${x_h}|_F \in \mathbb{P}^k(F)$ that
\begin{eqnarray*}
	( \nabla \cdot  \bfeta_h , x_h) &=& \sum_{K \in \cT} \left(
	-( \bfeta_h , \nabla x_h)_K  + \left<  \bfeta_h \cdot \bfn_K, x_h \right>_{\partial K}\right)=
	\| \nabla x_h\|^2 + 
	\sum_{F\in \mathcal{F}_I \cup \cF_D}\|h^{-\frac12}
	 \jump{x_h}\|^2_{F},
\end{eqnarray*}
which, combining with \cref{stability_1}, the Cauchy-Schwartz inequality and \cref{eq:etastab}, gives
\begin{equation} \label{stability_2}
\begin{split}
	&\mathcal{A}[(v_h,\bfq_h,x_h),(v_h,\bfq_h+ \epsilon \bfeta_h,-x_h+ \xi_h)]  \\
	 \ge&\,
	\|\bfq_h - \bfbeta v_h + A \nabla v_h\|^2  + \| h ( \nabla \cdot \bfq_h+ \mu v_h) \|^2
	- \dfrac{1}{4} \|\bfq_h - \bfbeta v_h + A \nabla v_h\|^2 \\
	&-\epsilon^2\|\bfeta_h\|^2 	\!+\! \epsilon\left( \| \nabla x_h\|^2 \!+\! 
	\sum_{F\in \mathcal{F}_I \cup \cF_D}\|h^{-\frac12} \jump{x_h}\|^2_{F} \right)\\
	\ge&\,
	\dfrac{3}{4}\|\bfq_h \!-\! \bfbeta v_h \!+\! A \nabla v_h\|^2  \!+\! \| h ( \nabla \cdot \bfq_h+ \mu v_h) \|^2
	\!+\!\epsilon(1 - \epsilon C_{ds}^2) \|x_h\|_{1,h}^2.
	\end{split}
	\end{equation}
 \cref{eq:is1} is then a direct result of \cref{stability_2} by choosing 
	$\epsilon = \dfrac{1}{2} C_{ds}^{-2}$ and
	$\alpha_c = \min\left(\dfrac{3}{4},\dfrac{1}{2}\epsilon \right)$.
	
To prove \cref{eq:is2} first applying the triangle inequality gives
\begin{equation}\label{coecive:1}
\tnorm{(\tilde v_h, \tilde \bfq_h)}_{-1} \!+\! \|\tilde x_h\|_{1,h} 
\leq
\tnorm{(v_h,\bfq_h)}_{-1} \!+\! \|x_h\|_{1,h}+\tnorm{(0,\epsilon \bfeta_h)}_{-1} \!+\! \|\xi_h\|_{1,h},
\end{equation}
Then applying the trace and inverse inequalities  and \cref{eq:etastab} yields
\begin{equation} \label{coecive:2}
\tnorm{(0,\epsilon \bfeta_h)}_{-1} = \epsilon ( \|\bfeta_h\|
+\|h \nabla \cdot \bfeta_h\|) \lesssim
\|\bfeta_h\|  \lesssim \|x_h\|_{1,h},
\end{equation}
and
\begin{equation} \label{coecive:3}
\|\xi_h\|_{1,h}  \lesssim h^{-1}\| \xi_h \| = \|h (\nabla \cdot
\bfq_h + \mu v_h)\| \le \tnorm{v_h, \bfq_h}_{-1}.
\end{equation}
Combining \cref{coecive:1}--\cref{coecive:3} results in \cref{eq:is2}.
This completes the proof of the proposition.
\end{proof}
\begin{prop}[Existence and Uniqueness]
The linear system defined by \cref{eq:compact}
admits an unique solution $(u_h ,\bfp_h,z_h) \in V_{g,D}^k \times
RT_{\psi,N}^k \times X_h^k$.
\end{prop}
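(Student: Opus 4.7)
The plan is to reduce to a square finite-dimensional linear system with homogeneous boundary conditions, then invoke the inf-sup condition of \cref{prop:infsup} to get injectivity, which on finite-dimensional spaces of equal dimension is equivalent to bijectivity.

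First I would handle the non-homogeneous boundary data. Choose a discrete lifting $u_{h,g} \in V_{g,D}^k$ of the Dirichlet data (e.g.\ nodal interpolation of any continuous lifting) and a discrete lifting $\bfp_{h,\psi} \in RT_{\psi,N}^l$ of the Neumann data (e.g.\ via a Raviart--Thomas interpolation of a sufficiently smooth extension, or by direct prescription of the normal moments on $\Gamma_N$). Writing $u_h = u_{h,0} + u_{h,g}$ with $u_{h,0} \in V_{0,D}^k$ and $\bfp_h = \bfp_{h,0} + \bfp_{h,\psi}$ with $\bfp_{h,0} \in RT_{0,N}^k$, the system \cref{eq:compact} becomes: find $(u_{h,0},\bfp_{h,0},z_h) \in V_{0,D}^k \times RT_{0,N}^k \times X_h^k$ such that
\[
\mathcal{A}[(u_{h,0},\bfp_{h,0},z_h),(v_h,\bfq_h,x_h)] = l_h(x_h) - \mathcal{A}[(u_{h,g},\bfp_{h,\psi},0),(v_h,\bfq_h,x_h)]
\]
for all test functions $(v_h,\bfq_h,x_h) \in V_{0,D}^k \times RT_{0,N}^k \times X_h^k$. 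This is a square linear system, since the trial and test spaces now coincide and are finite-dimensional.

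Next I would invoke standard linear algebra: for a square linear system on finite-dimensional spaces, existence of a solution for every right-hand side is equivalent to uniqueness, which is equivalent to the kernel of the operator being trivial. So it suffices to show that if $\mathcal{A}[(u_{h,0},\bfp_{h,0},z_h),(v_h,\bfq_h,x_h)] = 0$ for all test functions, then $(u_{h,0},\bfp_{h,0},z_h) = (0,\mathbf{0},0)$.

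For this, I apply \cref{prop:infsup}: there exists $(\tilde v_h,\tilde\bfq_h,\tilde x_h) \in V_{0,D}^k \times RT_{0,N}^k \times X_h^k$ such that
\[
\alpha_c\bigl(\tnorm{(u_{h,0},\bfp_{h,0})}_{-1}^2 + \|z_h\|_{1,h}^2\bigr) \le \mathcal{A}[(u_{h,0},\bfp_{h,0},z_h),(\tilde v_h,\tilde\bfq_h,\tilde x_h)] = 0.
\]
Since $\tnorm{\cdot}_{-1}$ controls $\|\bfbeta u_{h,0} - A\nabla u_{h,0} - \bfp_{h,0}\|$ and $\|h(\nabla\cdot\bfp_{h,0} + \mu u_{h,0})\|$, and $\|\cdot\|_{1,h}$ is a norm on $X_h^k$ by the discrete Poincar\'e inequality \cref{eq:disc_poincare}, we immediately get $z_h = 0$, $\nabla\cdot\bfp_{h,0} + \mu u_{h,0} = 0$, and $\bfp_{h,0} = \bfbeta u_{h,0} - A\nabla u_{h,0}$. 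Substituting the latter into the former yields $-\nabla\cdot(A\nabla u_{h,0} - \bfbeta u_{h,0}) + \mu u_{h,0} = 0$ weakly, with $u_{h,0}|_{\Gamma_D} = 0$ and $(A\nabla u_{h,0} - \bfbeta u_{h,0})\cdot\bfn|_{\Gamma_N} = 0$; by the well-posedness of the continuous problem (Babu\v{s}ka--Lax--Milgram) we conclude $u_{h,0} = 0$, whence $\bfp_{h,0} = \mathbf{0}$.

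The main obstacle is essentially bookkeeping: the inf-sup estimate of \cref{prop:infsup} is stated with $V_{0,D}^k \times RT_{0,N}^k \times X_h^k$, so one needs to be careful to reduce the problem with inhomogeneous boundary data to this setting before applying it. The rest is a routine consequence of the rank--nullity theorem applied to a finite-dimensional square system.
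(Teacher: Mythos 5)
Your proposal is correct and follows essentially the same route as the paper: reduce to a square finite-dimensional system where uniqueness implies existence, use the inf-sup condition of \cref{prop:infsup} to show any kernel element has vanishing $\tnorm{\cdot}_{-1}$ and $\|\cdot\|_{1,h}$ norms, and then invoke well-posedness of the continuous problem \cref{CD_strong}--\cref{CD_indef} to conclude the primal component (and hence the flux) vanishes. The paper phrases this as subtracting two putative solutions rather than explicitly lifting the boundary data, but the arguments are identical in substance.
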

\begin{proof}
 In order to prove the invertibility of the square linear system it is equivalent to prove the uniqueness. 
Assume that there exist two sets of solutions, 
$(u_{1,h},\bfp_{1,h},z_{1,h})$ and $(u_{2,h},\bfp_{2,h},z_{2,h})$, both in $ V_{g,D}^k \times
RT_{\psi,N}^k \times X_h^k$. We then have that for all $(v_n, \bfq_h, x_h)$ in  the space of $V_{0,D}^k \times RT^k_{0,N} \times X_h^k$ there holds
\[
\mathcal{A}[(u_{1,h} - u_{2,h},\bfp_{1,h} - \bfp_{2,h}, z_{1,h} - z_{2,h}),(v_h,\bfq_h,x_h)] = 0.
\]
By \cref{prop:infsup}, the following must be true:
\[ 
	\| (u_{1,h} - u_{2,h}, \bfp_{1,h} - \bfp_{2,h})\|_{-1} + \| z_{1,h} - z_{2,h}\|_{1,h}=0,
\]
which immediately implies
\[
	z_{1,h} = z_{2,h}  \quad \mbox{and} \quad
	\nabla \cdot \left(\bfbeta (u_{1,h} - u_{2,h})- A \nabla (u_{1,h} - u_{2,h})\right) + \mu (u_{1,h} - u_{2,h}) = 0.
\]
Since \cref{CD_strong}--\cref{CD_indef} admits an unique trivial solution for zero datum we conclude that $u_{1,h} = u_{2,h}$ and, hence,
$\bfp_{1,h} = \bfp_{2,h}$. This completes the proof of the proposition.
\end{proof}

We end this section by proving the continuity of the bilinear form.
\begin{prop}[Continuity]\label{prop:continuity}
For all $(v,\bfq) \in H^1(\Omega) \times
H_{0,N}(div,\Omega)$ and for all $(v_h,\bfq_h,x_h) \in V_h^k \times
RT^l \times X_h^m$  there holds
\begin{equation}\label{eq:cont1}
\mathcal{A}[(v,\bfq,0), (v_h,\bfq_h,x_h)] \leq \tnorm{(v,\bfq)}_{\sharp} \, (\tnorm{(v_h,\bfq_h)}_{-1}+\|x_h\|_{1,h}).
\end{equation}

\end{prop}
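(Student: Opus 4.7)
The plan is to read off the three contributions in $\mathcal{A}[(v,\bfq,0),(v_h,\bfq_h,x_h)]$, bound each by the relevant pieces of $\tnorm{(v,\bfq)}_{\sharp}$ against pieces of $\tnorm{(v_h,\bfq_h)}_{-1}+\|x_h\|_{1,h}$, and finally collect. First I would note that setting the third argument to $0$ kills the term $b(\bfq_h,v_h,0)$, leaving
\[
\mathcal{A}[(v,\bfq,0),(v_h,\bfq_h,x_h)] \;=\; s[(v,\bfq),(v_h,\bfq_h)] + b(\bfq, v, x_h).
\]
The stabilizer piece is immediate: it is an $L^2$ inner product, so Cauchy–Schwarz gives
\[
|s[(v,\bfq),(v_h,\bfq_h)]| \;\leq\; s[(v,\bfq),(v,\bfq)]^{1/2} s[(v_h,\bfq_h),(v_h,\bfq_h)]^{1/2} \;\leq\; \tnorm{(v,\bfq)}_{-1}\tnorm{(v_h,\bfq_h)}_{-1}.
\]

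For $b(\bfq,v,x_h)=(\nabla\cdot\bfq+\mu v,x_h)$, I would handle the two summands separately. The $(\mu v,x_h)$ term is bounded directly by $\|\mu v\|\,\|x_h\|$, and then the discrete Poincar\'e inequality \cref{eq:disc_poincare} converts $\|x_h\|$ into $\|x_h\|_{1,h}$. The divergence term is the delicate one, because $x_h$ is discontinuous while $\bfq$ only lies in $H_{0,N}(\mathrm{div},\Omega)$. The key step is element-wise integration by parts,
\[
(\nabla\cdot\bfq,x_h) \;=\; \sum_{K\in\mathcal T}\!\bigl[-(\bfq,\nabla x_h)_K + \langle \bfq\cdot\bn_K, x_h\rangle_{\partial K}\bigr].
\]
Since $\bfq\cdot\bn$ is single-valued across interior faces (being in $H(\mathrm{div})$) and vanishes on $\Gamma_N$, the boundary sum collapses to
\[
\sum_{F\in\mathcal F_I\cup\mathcal F_D}\langle\bfq\cdot\bn_F,\jump{x_h}\rangle_F,
\]
using the jump/trace convention introduced just before \cref{eq:disc_poincare}.

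For the volume contribution, Cauchy–Schwarz yields $|(\bfq,\nabla x_h)_h|\le\|\bfq\|\,\|\nabla x_h\|_h\le\|\bfq\|\,\|x_h\|_{1,h}$. For the face sum I would insert the weights $h_F^{1/2}\cdot h_F^{-1/2}$ and apply a face-wise Cauchy–Schwarz to obtain
\[
\Bigl|\!\!\sum_{F\in\mathcal F_I\cup\mathcal F_D}\!\!\langle\bfq\cdot\bn_F,\jump{x_h}\rangle_F\Bigr| \;\leq\; \|h^{1/2}\bfq\|_{\mathcal F}\,\|h^{-1/2}\jump{x_h}\|_{\mathcal F_I\cup\mathcal F_D} \;\leq\; \|h^{1/2}\bfq\|_{\mathcal F}\,\|x_h\|_{1,h}.
\]
Summing the bounds for $s$ and the two pieces of $b$, one finds a right-hand side of the form $\bigl(\tnorm{(v,\bfq)}_{-1}+\|\mu v\|+\|\bfq\|+\|h^{1/2}\bfq\|_{\mathcal F}\bigr)\bigl(\tnorm{(v_h,\bfq_h)}_{-1}+\|x_h\|_{1,h}\bigr)$, which is exactly $\tnorm{(v,\bfq)}_{\sharp}\bigl(\tnorm{(v_h,\bfq_h)}_{-1}+\|x_h\|_{1,h}\bigr)$ by \cref{eq:cont_norm}.

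The main (mild) obstacle is the integration by parts step: one must be careful that $\bfq$ is only in $H(\mathrm{div})$, so $\bfq\cdot\bn$ is merely an $H^{-1/2}$ trace, and the manipulation of the boundary integrals must be understood in the appropriate duality sense; the presence of the weight $h^{1/2}$ in $\tnorm{\cdot}_{\sharp}$ is what allows this normal trace to be controlled through the face-based Cauchy–Schwarz. Everything else is a routine assembly of Cauchy–Schwarz, the discrete Poincar\'e inequality \cref{eq:disc_poincare}, and the definitions \cref{eq:tnorm}–\cref{eq:cont_norm}.
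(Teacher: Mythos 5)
Your proposal is correct and follows essentially the same route as the paper's proof: Cauchy--Schwarz on the stabilizer, element-wise integration by parts on $b(\bfq,v,x_h)$ with the face terms weighted by $h_F^{\pm 1/2}$, and the discrete Poincar\'e inequality \cref{eq:disc_poincare} to absorb $\|x_h\|$ into $\|x_h\|_{1,h}$. The only difference is that you spell out the term-by-term assembly against the $\sharp$-norm, which the paper leaves implicit.
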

\begin{proof}
The inequality \cref{eq:cont1} follows by first using the Cauchy-Schwarz
inequality in the symmetric part of the formulation,
\[
s[(v,\bfq),(v_h,\bfq_h)]
 \lesssim s[(v,\bfq),(v,\bfq)]^{\frac12} s[(v_h,\bfq),(v_h,\bfq)]^{\frac12}.
\]
For the remaining term we use the divergence formula elementwisely to obtain
\[
(\nabla \cdot \bfq + \mu v, x_h) = \sum_{K \in \mathcal{T}}
-(\bfq,\nabla x_h)_K + \sum_{F \in \mathcal{F}_I \cup \cF_D} \left<\bfq \cdot \bfn_F,\jump{x_h}
\right>_F+(\mu v,x_h).
\]
\cref{eq:cont1} then follows by applying the Cauchy-Schwartz inequality and \cref{eq:disc_poincare}.
This completes the proof of the proposition.
\end{proof}

\section{Error Estimation} \label{sec:error-estimate}
In this section we will prove optimal error estimates for smooth solutions,
both in the diffusion dominated and the advection dominated
regimes. When the diffusion dominates we prove optimal error
estimates in both the $H^1$- and $L^2$-norms under very mild \textcolor{black}{stability} assumptions on
the continuous problem. In this part constants may blow up as the
Peclet number becomes large.

For dominating advection we need to make an
assumption on the problem data to prove an error estimate in the
$H^{-1}$-norm.  This is then used to prove an estimate that is optimal for the error in
the divergence of the flux, computed using the primal variable, or the ``streamline derivative''. However,
we can not improve on the order for the $L^2$-error as for typical
residual based stabilized finite element methods. In this part
constants remain bounded as the Peclet number becomes high.

\subsection{Error estimate for the residual}
First we prove the optimal convergence result for the residual,
i.e., the optimal convergence for the triple norm
\cref{eq:tnorm}. This estimate will then be of use in both the high
and low Peclet regimes.

\begin{lem}[Estimate of Residual]\label{lem:residualest}
Assume that $(u,\bfp)$ is the solution to \cref{weak_CD} with 
$u \in H^{k+1} \cap V_{g,D}(\Omega)$, $\bfp \in  H^{l+1}(\Omega)^d \cap H_{\psi,N}(\Omega)^d$ and $l \leq k$, and
that $(u_h,\bfp_h,z_h) \in V_{g,D}^{k} \times RT_{\psi,N}^k \times X_h^k$ is the solution of
\cref{eq:compact}. 
Then there holds
\begin{equation} \label{Residual-estimate}
\tnorm{(u-u_h,\bfp - \bfp_h)}_{-1} +\|z_h\|_{1,h} \lesssim 
c_u h^k|u|_{H^{k+1}(\Omega)}
+ h^{l+1} |\bfp|_{H^{l+1}(\Omega)}.
\end{equation}

 \end{lem}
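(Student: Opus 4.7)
The proof will follow the standard pattern for mixed problems equipped with a discrete inf--sup condition: split the error using interpolants, apply the inf--sup condition of \cref{prop:infsup} to the discrete part, use Galerkin orthogonality to replace the discrete error by the interpolation error, and then bound the latter with \cref{lem:approx} via the continuity of \cref{prop:continuity}.

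First I would introduce the interpolation errors $\eta_u := u - i_h u$, $\eta_p := \bfp - \bfR_h \bfp$ and the discrete errors $e_u := i_h u - u_h$, $e_p := \bfR_h \bfp - \bfp_h$, so that $u-u_h = \eta_u + e_u$ and $\bfp - \bfp_h = \eta_p + e_p$. One should check that $e_u \in V_{0,D}^k$ (since both $i_h u$ and $u_h$ match $g_h$ on $\Gamma_D$) and $e_p \in RT^k_{0,N}$ (since both $\bfR_h \bfp \cdot \bfn$ and $\bfp_h \cdot \bfn$ match $-\psi_h$ on $\Gamma_N$), so the triple $(e_u,e_p,-z_h)$ is a legitimate input to \cref{prop:infsup}.

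Next I would derive Galerkin orthogonality. Since the right-hand sides $l_h(x_h) = (f,x_h)$ of \cref{eq:compact} and the consistency relation \cref{eq:consist_full} coincide, subtracting gives
\begin{equation*}
\mathcal{A}[(u-u_h,\bfp-\bfp_h,-z_h),(v_h,\bfq_h,x_h)] = 0
\end{equation*}
for all admissible discrete test functions. Rewriting this in terms of the splitting yields
\begin{equation*}
\mathcal{A}[(e_u,e_p,-z_h),(v_h,\bfq_h,x_h)] = -\mathcal{A}[(\eta_u,\eta_p,0),(v_h,\bfq_h,x_h)].
\end{equation*}

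Now I apply \cref{prop:infsup} to the triple $(e_u,e_p,-z_h)$ to produce a test function $(\tilde v_h,\tilde \bfq_h,\tilde x_h)$ satisfying \cref{eq:is1}--\cref{eq:is2}. Combining with the Galerkin orthogonality identity and the continuity bound \cref{eq:cont1} of \cref{prop:continuity} gives
\begin{equation*}
\alpha_c\bigl(\tnorm{(e_u,e_p)}_{-1}^2 + \|z_h\|_{1,h}^2\bigr) \lesssim \tnorm{(\eta_u,\eta_p)}_{\sharp}\bigl(\tnorm{(e_u,e_p)}_{-1} + \|z_h\|_{1,h}\bigr),
\end{equation*}
from which dividing by the discrete norm on the right yields $\tnorm{(e_u,e_p)}_{-1} + \|z_h\|_{1,h} \lesssim \tnorm{(\eta_u,\eta_p)}_{\sharp}$. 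A final triangle inequality, together with the approximation estimate \cref{eq:sharpnorm_approx} of \cref{lem:approx}, delivers \cref{Residual-estimate}.

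The only non-mechanical point is to make sure the algebra in the Galerkin orthogonality step is done correctly, i.e.\ that the multiplier component enters with the right sign so that $(e_u,e_p,-z_h)$ (rather than $(e_u,e_p,z_h)$) is the object controlled by the inf--sup argument; modulo that bookkeeping the proof is a direct concatenation of \cref{lem:approx}, \cref{prop:infsup} and \cref{prop:continuity}.
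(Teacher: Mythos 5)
Your proposal is correct and follows essentially the same route as the paper's proof: split via the nodal and Raviart--Thomas interpolants, verify the discrete error pair lies in $V_{0,D}^k \times RT_{0,N}^k$, apply the inf--sup condition of \cref{prop:infsup} to the discrete error triple together with the multiplier, use Galerkin orthogonality (from \cref{eq:compact} and \cref{eq:consist_full}) and the continuity bound \cref{eq:cont1}, and conclude with \cref{lem:approx} and a triangle inequality. The only cosmetic difference is a sign convention (the paper tests with $(u_h - i_h u,\, \bfp_h - R_h\bfp,\, z_h)$, the negative of your triple), which changes nothing.
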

\begin{proof}
Firstly, applying the triangle inequality gives
  \begin{equation}\label{resi-error-1}
 	\tnorm{(u-u_h,\bfp - \bfp_h)}_{-1} \le 
	\tnorm{(u-i_h u,\bfp - R_h\bfp)}_{-1}+
	\tnorm{(u_h-i_h u,\bfp_h - R_h \bfp)}_{-1}.
  \end{equation}
 Note that $ u _h - i_h u \in V_{0,D}^k$ and $\bfp_h - R_h \bfp \in RT_{0,N}^k$. Then by \cref{prop:infsup} there exists 
 $(v_h, \bfq_h, w_h) \in  V_{0,D}^{k} \times RT_{0,N}^k \times X_h^k$ such that
\textcolor{black}{
 \begin{equation*}
 \begin{split}
	& \tnorm{(u_h-i_h u,\bfp_h - R_h\bfp)}_{-1}^2 + \|z_h\|_{1,h}^2\\
	\lesssim& \, \mathcal{A}[(u_h-i_h u,\bfp_h - R_h\bfp, z_h)),(v_h, \bfq_h, w_h) ] 
	= \mathcal{A}[(u-i_h u, \bfp-R_h\bfp, 0),(v_h, \bfq_h, w_h)  ]  \\
	 \lesssim&\,
	 \tnorm {(u-i_h u, \bfp-R_h\bfp )}_{\sharp} 
	 \left(\tnorm{(u_h-i_h u,\bfp_h - R_h\bfp)}_{-1} + \|z_h\|_{1,h} \right).
\end{split}
 \end{equation*}
 }
The last equality and inequality follows from  \cref{eq:compact},  \cref{eq:consist_full} and \cref{prop:continuity}.
 Therefore, we immediately have that
\[
  	\tnorm{(u_h-i_h u,\bfp_h - R_h\bfp)}_{-1} \lesssim
	\tnorm {(u-i_h u, \bfp-R_h\bfp )}_{\sharp} 
\]
 which, combing with \cref{eq:sharpnorm_approx} and \cref{resi-error-1}, implies \cref{Residual-estimate}. This completes the proof of the lemma.
\end{proof} 

Observe that the hidden constant in \cref{Residual-estimate} has no
inverse powers of the diffusivity. Hence we have the following corollary.
\begin{cor}\label{col:residual-estimate-for-advection}
Under the same assumptions as in \cref{lem:residualest}, if
$\|A\|_\infty << h$, $\bfbeta_\infty = O(1)$, $|\mu| = O(1)$, there holds
\begin{equation} \label{Residual-estimate_HiPec}
\tnorm{(u-u_h,\bfp - \bfp_h)}_{-1}+ \|z_h\|_{1,h} \lesssim
h^{k+1} |u|_{H^{k+1}(\Omega)}+ h^{l+1} |\bfp|_{H^{l+1}(\Omega)}.
\end{equation}
\end{cor}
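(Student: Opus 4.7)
The proof is essentially a direct substitution into the bound of Lemma~\ref{lem:residualest}, so the plan is very short. The key observation is to inspect the definition of the parameter $c_u := \bfbeta_\infty h + \|A\|_\infty + |\mu| h$ under the stated hypotheses.

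First I would note that the assumptions $\bfbeta_\infty = O(1)$ and $|\mu| = O(1)$ immediately give $\bfbeta_\infty h + |\mu| h \lesssim h$. The assumption $\|A\|_\infty \ll h$ means that $\|A\|_\infty$ contributes at most an $O(h)$ term (in fact something strictly smaller), so together we obtain $c_u \lesssim h$ with a constant independent of the mesh size.

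Next I would simply invoke Lemma~\ref{lem:residualest}, whose hypotheses are unchanged, and substitute the bound $c_u \lesssim h$ into its conclusion. The term $c_u h^k |u|_{H^{k+1}(\Omega)}$ thus becomes $\lesssim h^{k+1} |u|_{H^{k+1}(\Omega)}$, while the flux approximation term $h^{l+1}|\bfp|_{H^{l+1}(\Omega)}$ is preserved verbatim. Combining these yields exactly the claimed estimate \cref{Residual-estimate_HiPec}.

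There is no real obstacle here; the corollary is purely a bookkeeping statement that makes explicit the scaling of the constant from Lemma~\ref{lem:residualest} in the advection-dominated regime. The only thing worth emphasizing is that the hidden constants in Lemma~\ref{lem:residualest} do not contain any inverse powers of $\|A\|_\infty$ (as already remarked in the text), which is what makes the substitution legitimate as $\|A\|_\infty \to 0$.
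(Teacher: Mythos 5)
Your proof is correct and follows exactly the paper's (very brief) reasoning: the paper simply observes that the hidden constant in \cref{Residual-estimate} contains no inverse powers of the diffusivity, so that under the stated hypotheses $c_u \lesssim h$ and the bound of \cref{lem:residualest} specializes to \cref{Residual-estimate_HiPec}. Your added remark about the constants being uniform as $\|A\|_\infty \to 0$ is precisely the point the authors emphasize before stating the corollary.
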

\subsection{Error estimates in the diffusion dominated regime}
In this subsection we provide results for the error estimation in the diffusion dominated regime, i.e.,
$\dfrac{ \bfbeta_\infty}{\lambda_{min,A}}$ is of order $1$ where $\lambda_{min,A}$ is the smallest eigenvalue of $A$.

\begin{prop}[$H^1$-norm estimate] \label{thm:condstab_est} Assume that $(u,\bfp)$ is the solution to \cref{weak_CD}, $u
\in  H^{k+1}(\Omega) \cap V_{g,D}(\Omega)$ and $\bfp \in  H^{l+1}(\Omega)^d \cap H_{\psi,N}(\Omega)^d$ with $l < k$, and that
$(u_h,\bfp_h)$ is the solution of \cref{eq:compact}. Then the following estimate holds,

\begin{equation}\label{eq:loc_error}
\|u - u_h\|_{V}  \le C \left(  c_u 
 h^k|u|_{H^{k+1}(\Omega)} + h^{l+1} \left( |\bfp|_{H^{l+1}(\Omega)} +
                   |\psi|_{H^{l+1/2}(\Gamma_N)} \right) \right),
\end{equation}
where the constant $C$ depends on the datum in the following manner
\[
	C \equiv \dfrac{\|\bfbeta\|_{\infty}}{\lambda_{min,A}}.
\]

\end{prop}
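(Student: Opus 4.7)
The plan is to split $u-u_h=(u-i_h u)+(i_h u-u_h)$ by the triangle inequality. The first summand is controlled directly by \cref{eq:approx_Lagrange}. The discrete error $e_h:=i_h u-u_h$ lies in $V_{0,D}^k\subset V_{0,D}$, since both $i_h u$ and $u_h$ inherit the same discrete Dirichlet datum $g_h$ on $\Gamma_D$. Applying the continuous stability \cref{Dirichlet_stab} to the functional $v\mapsto a(e_h,v)$ yields
\[
\|e_h\|_V \le \alpha^{-1}\sup_{\substack{v\in V_{0,D}\\ \|v\|_V=1}} a(e_h,v),
\]
and the factor $\alpha^{-1}$ is precisely where the ratio $\bfbeta_\infty/\lambda_{min,A}$ will enter in the diffusion-dominated regime.

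Writing $a(e_h,v)=-a(u-i_h u,v)+a(u-u_h,v)$, the first piece is controlled by continuity of $a(\cdot,\cdot)$ together with \cref{eq:approx_Lagrange}, producing a contribution of order $c_u h^k|u|_{H^{k+1}(\Omega)}\|v\|_V$. For the second piece I derive a computable identity: inside $a(u_h,v)=(\mu u_h,v)+(A\nabla u_h-\bfbeta u_h,\nabla v)$, add and subtract $(\bfp_h,\nabla v)$, integrate the new term by parts using $\bfp_h\cdot\bfn=-\psi_h$ on $\Gamma_N$ together with $v=0$ on $\Gamma_D$, and invoke the pointwise conservation identity $\mu u_h+\nabla\cdot\bfp_h=\pi_{X,k}f$, which follows from \cref{eq:EL_2} because $\mu u_h+\nabla\cdot\bfp_h\in X_h^k$. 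Subtracting from $a(u,v)=l(v)$ produces
\[
a(u-u_h,v)=(f-\pi_{X,k}f,v)+\langle\psi-\psi_h,v\rangle_{\Gamma_N}+(\bfbeta u_h-A\nabla u_h-\bfp_h,\nabla v).
\]

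The three terms are estimated separately. The stabilizer residual is dispatched via consistency of $s[\cdot,\cdot]$: since $\bfbeta u-A\nabla u-\bfp\equiv 0$ for the exact pair, we have $\|\bfbeta u_h-A\nabla u_h-\bfp_h\|=\sqrt{2}\,s[(u-u_h,\bfp-\bfp_h),(u-u_h,\bfp-\bfp_h)]^{1/2}\le\sqrt{2}\,\tnorm{(u-u_h,\bfp-\bfp_h)}_{-1}$, which by \cref{lem:residualest} is bounded by $c_u h^k|u|_{H^{k+1}(\Omega)}+h^{l+1}|\bfp|_{H^{l+1}(\Omega)}$. The data term is estimated by $L^2$-orthogonality, $(f-\pi_{X,k}f,v)=(f-\pi_{X,k}f,v-\pi_{X,0}v)$ with $\|v-\pi_{X,0}v\|\lesssim h\|v\|_V$, combined with the elementwise projection estimates $\|\nabla\cdot\bfp-\pi_{X,k}(\nabla\cdot\bfp)\|\lesssim h^l|\bfp|_{H^{l+1}(\Omega)}$ and $\|\mu u-\pi_{X,k}(\mu u)\|\lesssim|\mu|h^{k+1}|u|_{H^{k+1}(\Omega)}$ applied to $f=\nabla\cdot\bfp+\mu u$; this yields the $h^{l+1}|\bfp|_{H^{l+1}(\Omega)}$ contribution and a higher-order remainder absorbed into $c_u h^k|u|_{H^{k+1}(\Omega)}$. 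The boundary term is treated analogously by face-wise $L^2$-orthogonality of $\piF$ together with the trace inequality \cref{eq:trace} and the regularity $\psi\in H^{l+1/2}(\Gamma_N)$, giving $h^{l+1}|\psi|_{H^{l+1/2}(\Gamma_N)}$. Collecting everything reproduces \cref{eq:loc_error}. The main technical obstacle is exactly this orthogonality trick for the data term: the limited regularity $f\in H^l$ (inherited from $\bfp\in H^{l+1}$) would otherwise lose an order of $h$, and a secondary subtlety is the bookkeeping verifying that the final constant depends on the data only through $\bfbeta_\infty/\lambda_{min,A}$, which is the diffusion-dominated form of the Babu\v{s}ka--Lax--Milgram constant $\alpha^{-1}$.
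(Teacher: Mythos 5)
Your proposal is correct and follows essentially the same route as the paper: reduce to the Babu\v{s}ka--Lax--Milgram stability bound for the residual functional $v\mapsto a(u-u_h,v)$, handle the Dirichlet trace mismatch separately, and estimate the residual by combining the orthogonality from \cref{eq:EL_2} (paired against $v-\pi_{X,0}v$ and $v-\piF v$) with the stabilizer control and \cref{lem:residualest}. The only differences are cosmetic: you split the error through $i_h u$ rather than through a boundary lifting $e_g$, and you rewrite $f-\nabla\cdot\bfp_h-\mu u_h$ as $f-\pi_{X,k}f$ and bound it by data approximation, where the paper identifies it with $\nabla\cdot(\bfp-\bfp_h)+\mu(u-u_h)$ and reuses the $\tnorm{\cdot}_{-1}$ bound---both yield the same $h^{l+1}|\bfp|_{H^{l+1}(\Omega)}$ contribution.
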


\begin{rem}
Since the above constant $C$ blows up as $\lambda_{min,A}$ goes to zero, the above estimation is valid only for diffusion dominated problem.
\end{rem}

\begin{proof}
To avoid using coercivity arguments, our starting point for the error analysis
below is the 
stability estimate \cref{Dirichlet_stab}. Let $e = u-u_h$. we note that $e$ is a solution to \cref{weak_CD} with the right hand side linear operator  being $r(v):=l(v) - a(u_h,v) $, i.e.,
\[
a(e,v) = r(v).
\]
  Now apply the decomposition $e = e_0 + e_g$ such that 
$e_g|_{\Gamma_D} = e|_{\Gamma_D}$ and that $\|e_g\|_{V} \lesssim \|e\|_{H^{1/2}(\Gamma_D)}$.
   It then follows from \cref{Dirichlet_stab} that
\[
\|e_0\|_V \leq C \sup_{v \in V} \dfrac{r(v) - a(e_g,v)}{\|v\|_{V}}
\leq C(\|r\|_{V'}+\|e_g\|_V).
\]
Hence 
\[
\|e\|_V \leq \|e_0\|_V +\|e_g\|_V \leq C (\|r\|_{V'}+\|e_g\|_V).
\]
For the term $\|e_g\|_V$, by definition and a standard
trace inequality, we have
\begin{equation}\label{e-g estimate}
\|e_g\|_V \leq C \|u - i_h u\|_{H^{\frac12}(\Gamma_D)} \leq C \|u -
i_h u\|_V \leq C h^k|u|_{H^{k+1}(\Omega)}.
\end{equation}
To prove the bound on $\|r\|_{V'}$ we recall that
\[
\|r\|_{V'} = \sup_{v \in V} \dfrac{a(u - u_h, v) }{\|v\|_{V}}.
\]
Then by integration by parts, (\ref{eq:EL_2}) and Cauchy Schwartz inequality, we have
	\begin{eqnarray*}
	&&a(u - u_h,v) = l(v) - a(u_h,v)\\
        & = & (f,v)+(\psi,v)_{\Gamma_N}
           - (A \nabla u_h - \beta u_h,\nabla v) - (\mu u_h,v) \\
	&=&(f - \nabla \cdot \bfp_h - \mu u_h, v)+(\psi+\bfp_h \cdot
            n, v)_{\Gamma_N} -
	( \bfp_h - \beta u_h +A \nabla u_h, \nabla v) \\
	&=&(f - \nabla \cdot \bfp_h - \mu u_h, v - \pi_{X,0}v)
	+
	(\bfp - \bfp_h - \beta (u -u_h) +A \nabla (u-u_h), \nabla v)\\
	&&+(\psi - \psi_h, v - \pi_{F,0} v)_{\Gamma_N}\\
	&\lesssim&
	\tnorm{(\bfp - \bfp_h),( u-u_h) }_{-1} \| \nabla v\| +
                   \|h^{\frac12} (\psi
                   - \psi_h)\|_{\Gamma_N} \| \nabla v\|
                   	\end{eqnarray*}
		which, combining with \cref{e-g estimate}, (\ref{Residual-estimate}) and the following
                observation (see e.g., Lemma 5.2 of \cite{Ern:2017})
\[
\|h^{\frac12} (\psi
                   - \psi_h)\|_{\Gamma_N} 
               \lesssim h^{l+1}
                   |\psi|_{H^{l+1/2}(\Gamma_N)} ,
\] gives 
\cref{eq:loc_error}. This completes the proof of the proposition.
\end{proof}

In the remaining part of this subsection we will focus on the
convergence of the $L^2$-norm error in the primal variable.
For
simplicity we here restrict the discussion to the case of a convex
polygonal domain $\Omega$ and
homogeneous Dirichlet condition.  We first prove the convergence result for the $L^2$-norm of the
Lagrange multiplier.

\begin{prop}\label{prop:L2-langrange}
Assume that $ u \in H_{0}^1(\Omega) \cap H^{k+1}(\Omega)$ and $\bfp \in H^{k}(\Omega)^d$. Let
 $z_h$ be the Lagrange multiplier of the system  {\em \cref{eq:compact}}. We have the following error estimate
 \begin{equation} \label{L2-multiplier}
 	\|z_h\|_{\Omega} \lesssim 
	 h^{k+1}  \left( |u|_{H^{k+1}(\Omega)}
	+  |\bfq|_{H^{k}(\Omega)} \right).
 \end{equation}
\end{prop}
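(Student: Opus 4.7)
The argument is a duality estimate exploiting the self-adjointness of $\mathcal{A}$: a direct check shows $\mathcal{A}[(u,\bfp,z),(v,\bfq,x)]=\mathcal{A}[(v,\bfq,x),(u,\bfp,z)]$, since $s$ is symmetric and the two $b$-terms are structurally identical. The ``dual problem'' that drives the Aubin--Nitsche argument is therefore the primal PDE itself with right-hand side $z_h$, not its formal adjoint. Let $\phi\in H^1_0(\Omega)\cap H^2(\Omega)$ solve
\[
  \nabla\cdot(\bfbeta\phi-A\nabla\phi)+\mu\phi = z_h,\qquad \phi|_{\partial\Omega}=0,
\]
which is well-posed under the standing inf--sup assumption and satisfies $\|\phi\|_{H^2}\lesssim\|z_h\|$ by elliptic regularity on the convex polygon. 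Set $\boldsymbol{\psi}:=\bfbeta\phi-A\nabla\phi$, so that $\nabla\cdot\boldsymbol{\psi}+\mu\phi=z_h$, the bracket $\bfbeta\phi-A\nabla\phi-\boldsymbol{\psi}$ vanishes identically, and $\|\boldsymbol{\psi}\|_{H^1}\lesssim\|z_h\|$.

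Test \eqref{eq:EL_1} with $(v_h,\bfq_h):=(i_h\phi,\bfR_h\boldsymbol{\psi})\in V^k_{0,D}\times RT^k$ (admissible since $\phi\in H^1_0$ and $\Gamma_N=\emptyset$). The commuting diagram $\nabla\cdot\bfR_h\boldsymbol{\psi}=\pi_{X,k}\nabla\cdot\boldsymbol{\psi}$ and $z_h\in X_h^k$ give
\[
  (\nabla\cdot\bfR_h\boldsymbol{\psi}+\mu i_h\phi,z_h) = \|z_h\|^2 + (\mu(i_h\phi-\phi),z_h),
\]
where the remainder is bounded by $h^2\|z_h\|^2$. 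On the right-hand side of \eqref{eq:EL_1}, since the exact $(u,\bfp)$ satisfies $\bfbeta u-A\nabla u-\bfp\equiv 0$, one has $\|\bfbeta u_h-A\nabla u_h-\bfp_h\|\lesssim\tnorm{(u-u_h,\bfp-\bfp_h)}_{-1}$. Similarly, because $\bfbeta\phi-A\nabla\phi-\boldsymbol{\psi}=0$, the test bracket reduces to pure interpolation error:
\[
  \bfbeta i_h\phi-A\nabla i_h\phi-\bfR_h\boldsymbol{\psi} = \bfbeta(i_h\phi-\phi)-A\nabla(i_h\phi-\phi)-(\bfR_h\boldsymbol{\psi}-\boldsymbol{\psi}),
\]
whose $L^2$-norm is $\lesssim h(\|\phi\|_{H^2}+\|\boldsymbol{\psi}\|_{H^1})\lesssim h\|z_h\|$ by \eqref{eq:approx_Lagrange} and \eqref{eq:approx_RT}. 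Cauchy--Schwarz on $s$ then yields $|s[(u_h,\bfp_h),(i_h\phi,\bfR_h\boldsymbol{\psi})]|\lesssim h\|z_h\|\,\tnorm{(u-u_h,\bfp-\bfp_h)}_{-1}$.

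Combining the two bounds and absorbing the $h^2\|z_h\|^2$ term on the left for $h$ sufficiently small produces $\|z_h\|\lesssim h\,\tnorm{(u-u_h,\bfp-\bfp_h)}_{-1}$. Applying \cref{lem:residualest} in the diffusion-dominated regime (so $c_u=O(1)$, with $l=k-1$ to accommodate the regularity $\bfp\in H^k$) gives $\tnorm{(u-u_h,\bfp-\bfp_h)}_{-1}\lesssim h^k(|u|_{H^{k+1}}+|\bfp|_{H^k})$, which establishes the claim.

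\textbf{Main obstacle.} The heart of the proof is identifying the correct dual problem. To pull $\|z_h\|^2$ out of \eqref{eq:EL_1} one needs $\nabla\cdot\bfq_h+\mu v_h\approx z_h$, while achieving the $h^{k+1}$ rate demands that the stabilizer bracket $\bfbeta v_h-A\nabla v_h-\bfq_h$ be reduced to an $O(h\|z_h\|)$ interpolation error — otherwise one only recovers the $h^k$ rate. Enforcing both constraints simultaneously at the continuous level uniquely forces $\boldsymbol{\psi}=\bfbeta\phi-A\nabla\phi$ with $\phi$ solving the primal (not adjoint) PDE, an unusual but structurally dictated choice reflecting the self-adjointness of $\mathcal{A}$.
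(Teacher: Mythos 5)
Your proof is correct and follows essentially the same route as the paper's: the same dual problem (the primal PDE with right-hand side $z_h$), the same test pair $(i_h\phi,\, R_h(\bfbeta\phi - A\nabla\phi))$ in \cref{eq:EL_1}, reduction of both stabilizer brackets to $O(h\|z_h\|)$ interpolation errors using that the continuous brackets vanish, and \cref{lem:residualest} to conclude. The only cosmetic difference is that you annihilate the term $(z_h,\nabla\cdot(\bfq-R_h\bfq))$ exactly via the commuting-diagram property, whereas the paper bounds it by $h\|z_h\|_{1,h}\|z_h\|$ through elementwise integration by parts and then invokes the $\|z_h\|_{1,h}$ bound from the residual estimate; both give the stated $h^{k+1}$ rate.
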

\begin{proof}
Let $\phi$ be the solution such that
\[
	\nabla \cdot (\bfbeta \phi -A \nabla \phi) + \mu \phi = z_h
\]
with boundary condition $\phi = 0 $ on $\partial \Omega$.
Then by the well-posedness assumption on the equation
\cref{CD_strong} and the assumption on $\Omega$ we have the following stability result:

\begin{equation}\label{L2-stability}
	\| \phi\|_{H^2(\Omega)} \lesssim \|z_h\|. 
\end{equation}
Let $\bfq = \bfbeta \phi - A \nabla \phi$. By adding and subtracting suitable interpolates we have
\begin{equation}\label{L2_estimate_0}
	\|z_h\|^2 
	=
	(z_h, \nabla \cdot (\bfq -R_h \bfq) + \mu (\phi -i_h \phi))
	+(z_h, \nabla \cdot R_h \bfq + \mu i_h \phi).
\end{equation}
For the first term in \cref{L2_estimate_0} using the element-wise divergence theorem, the facts that
\[	
	\int_{F} z_h (\bfq - R_h \bfq) \cdot  \bn_K \,ds = 0, \quad \forall \, K \in \cT, 
	\, \forall F \subset \partial K,
\]
and that
\begin{equation*}
	\| \bfq - R_h \bfq\| \lesssim h \| \phi\|_{H^2(\Omega)} \lesssim h \|z_h\| \quad \mbox{and} \quad
	\|\phi - i_h \phi\| \lesssim h^2 \|\phi\|_{H^2(\Omega)} \lesssim h^2 \|z_h\|,
\end{equation*}
and (\ref{eq:disc_poincare})
gives
\begin{equation} \label{L2-1}
 (z_h, \nabla \cdot (\bfq -  R_h \bfq) + \mu (\phi -i_h \phi))\\
\lesssim
 h (1 + \|\mu\|_{\infty}h)\|z_h\|_{1,h} \| z_h\|.
\end{equation}
 For the second term in \cref{L2_estimate_0}  we first apply equation \cref{eq:EL_1} with 
 $\bfq_h =
R_h (\bfq) \in RT^{k}$ and $v_h = i_h \varphi \in V_{0,D}^k$
with $\Gamma_N = \emptyset$, then 
applying the
Cauchy-Schwarz inequality,
\cref{eq:approx_Lagrange} and \cref{eq:approx_RT} that
 \begin{equation} \label{L2-3}
 \begin{split}
&(z_h, \nabla \cdot R_h \bfq + \mu i_h \phi) = -\left(\!\bfp_h \!-\! \bfbeta u_h + A \nabla u_h, 
R_h \bfq - \bfbeta i_h \phi + A \nabla (i_h \phi) \right)\\
\lesssim& \|\bfp_h - \bfbeta u_h + A \nabla u_h\|
\left(\|A\|_{\infty}\|\nabla ( \phi - i_h \phi )\| \!+\!
\|\bfbeta\|_{\infty} \| \|   \phi - i_h \phi\|
\!+\!\|\bfq - R_h  \bfq \| \right) \\
\lesssim& 
\|(\bfp_h - \bfbeta u_h + A \nabla u_h)\|
\left( \|A\|_{\infty} h + \|\bfbeta\|_{\infty} h^2 + h\right) \|\phi\|_{H^2(\Omega)} \\
\lesssim& h\|(\bfp_h - \bfbeta u_h + A \nabla u_h)\| \|z_h\|.
\end{split}
\end{equation}
\cref{L2-multiplier} is then a direct consequence of \cref{L2-1}, \cref{L2-3} and \cref{Residual-estimate}.
This completes the proof of the proposition.
\end{proof}

We now proceed to prove the error estimation of the primal variable in the  $L^2$ norm.
To estimate the error of the primal variable in the $L^2$ norm we require that the adjoint problem is well-posed and
satisfies a shift theorem for the $H^2$ semi-norm. 
\begin{assumption} \label{Assumption_2}
Consider the adjoint problem for \cref{CD_strong}. For each $\zeta \in L^2(\Omega)$, 
we assume that the data are such that the following adjoint problem admits an unique solution, using Fredholm's alternative,
\begin{equation}\label{eq:primal_wp}
- \nabla \cdot A \nabla \varphi - \bfbeta \cdot \nabla \varphi + \mu \varphi = \zeta \mbox{ in } \Omega
\end{equation}
with $\varphi\vert_{\partial \Omega} = 0$.
Furthermore, the following regularity result holds true:
\begin{equation}\label{eq:adjoint_stab}
\|\varphi\|_{H^2(\Omega)} \lesssim \|\zeta\|.
\end{equation}
\end{assumption}

\begin{prop}\label{prop:well_posed_L2}
Let $u \in H^{k+1}(\Omega) \cap H^1_0(\Omega)$, $ \bfp \in H^{k}(\Omega)^d$
and $(u_h, \bfp_h, z_h)$ be the
solution of \cref{eq:EL_1}--\cref{eq:EL_2}. Under the 
\cref{Assumption_2} we have 
\begin{equation} \label{L2_estimate}
\|u - u_h\|\lesssim h^{k+1} \left( |u|_{H^{k+1}(\Omega)}+  |\bfp|_{H^{k}(\Omega)} \right).
\end{equation}
\end{prop}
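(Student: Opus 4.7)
My plan is an Aubin--Nitsche duality argument tailored to the primal--dual mixed structure. Let $e := u - u_h$ and invoke \cref{Assumption_2} with $\zeta = e$ to obtain an adjoint state $\varphi \in H^2(\Omega) \cap H^1_0(\Omega)$ with $\|\varphi\|_{H^2(\Omega)} \lesssim \|e\|$. Since the homogeneous Dirichlet condition forces $e|_{\partial\Omega} = 0$, integration by parts in $\|e\|^2 = (e, -\nabla\cdot A\nabla\varphi - \bfbeta\cdot\nabla\varphi + \mu\varphi)$ produces
\begin{equation*}
\|e\|^2 = (\tilde\bfp_h - \bfp, \nabla\varphi) + \mu(e,\varphi),
\end{equation*}
where I set $\tilde\bfp_h := \bfbeta u_h - A\nabla u_h$, so that $\tilde\bfp_h - \bfp_h$ is precisely the residual measured by the primal stabilizer and thus $\|\tilde\bfp_h - \bfp_h\| \lesssim \tnorm{(e,\bfp-\bfp_h)}_{-1}$.

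The next step is to split $\tilde\bfp_h - \bfp = (\tilde\bfp_h - \bfp_h) + (\bfp_h - \bfp)$ and engineer a cancellation of the zero-order term. Integrating by parts in $(\bfp_h - \bfp, \nabla\varphi)$ (the boundary term vanishes because $\varphi|_{\partial\Omega}=0$) and using the $L^2$-projection identity $\nabla\cdot\bfp_h + \mu u_h = \pi_{X,k} f$, which follows from \cref{eq:EL_2} together with the fact that $\nabla\cdot\bfp_h \in X_h^k$ and $\mu u_h \in V_h^k \subset X_h^k$, the $\mu(e,\varphi)$ term drops out and we arrive at
\begin{equation*}
\|e\|^2 = (\tilde\bfp_h - \bfp_h,\nabla\varphi) + \bigl((I - \pi_{X,k})f,\varphi\bigr).
\end{equation*}
The second summand is the easy one: orthogonality of $\pi_{X,k}$ allows replacing $\varphi$ by $\varphi - \pi_{X,k}\varphi$, the identity $(I-\pi_{X,k})f = \nabla\cdot(\bfp - \bfp_h) + \mu e$ combined with the definition of $\tnorm{\cdot}_{-1}$ gives $\|(I-\pi_{X,k})f\| \lesssim h^{-1}\tnorm{(e,\bfp-\bfp_h)}_{-1}$, and the standard bound $\|(I-\pi_{X,k})\varphi\| \lesssim h^2\|\varphi\|_{H^2(\Omega)}$ then produces $\bigl((I-\pi_{X,k})f, \varphi\bigr) \lesssim h\,\tnorm{(e,\bfp-\bfp_h)}_{-1}\|\varphi\|_{H^2(\Omega)}$.

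For the first summand I split $\nabla\varphi = (\nabla\varphi - \bfR_h\nabla\varphi) + \bfR_h\nabla\varphi$ and exploit a discrete orthogonality hidden in \cref{eq:EL_1}. Testing with $(v_h, \bfq_h, x_h) = (0, \bfR_h\nabla\varphi, 0) \in V_{0,D}^k \times RT^k_{0,N} \times X_h^k$ (admissible since $\Gamma_N = \emptyset$) gives $(\tilde\bfp_h - \bfp_h, \bfR_h\nabla\varphi) = 2(\nabla\cdot\bfR_h\nabla\varphi, z_h)$; the RT commuting diagram $\nabla\cdot\bfR_h\nabla\varphi = \pi_{X,k}\Delta\varphi$ and $z_h \in X_h^k$ reduce this to $2(\Delta\varphi, z_h) \lesssim \|\varphi\|_{H^2(\Omega)}\|z_h\|$. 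The remainder is routine: $\|\tilde\bfp_h - \bfp_h\|\,\|\nabla\varphi - \bfR_h\nabla\varphi\| \lesssim h\,\tnorm{(e,\bfp-\bfp_h)}_{-1}\|\varphi\|_{H^2(\Omega)}$. Collecting all estimates with $\|\varphi\|_{H^2(\Omega)}\lesssim\|e\|$ yields $\|e\|^2 \lesssim \bigl(h\,\tnorm{(e,\bfp-\bfp_h)}_{-1} + \|z_h\|\bigr)\|e\|$, and feeding in \cref{lem:residualest} (with the RT approximation order corresponding to $\bfp \in H^k$) together with \cref{prop:L2-langrange} gives the claimed $O(h^{k+1})$ rate.

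The decisive conceptual step is the clean cancellation of $\mu(e,\varphi)$ via the $L^2$-projection identity for the conservation residual, which isolates two residual-type pairings that can each be upgraded by one additional power of $h$. The main technical obstacle is that, in the diffusion-dominated regime, the triple-norm residual only delivers $O(h^k)$, so the extra half-order must be harvested from two independent sources: approximation of $\nabla\varphi$ by its RT interpolant, and the superconvergence $\|z_h\| \lesssim h^{k+1}$ established independently in \cref{prop:L2-langrange}. Without the latter, the direct duality argument would not reach the optimal rate.
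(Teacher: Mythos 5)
Your argument is correct and is essentially the paper's own proof: the identity you arrive at, $\|e\|^2 = \bigl((I-\pi_{X,k})f,\varphi\bigr) + (\tilde\bfp_h-\bfp_h,\nabla\varphi)$, coincides with the paper's starting decomposition (since $f-\nabla\cdot\bfp_h-\mu u_h=(I-\pi_{X,k})f$ by \cref{eq:EL_2} and $\tilde\bfp_h-\bfp_h=-(\bfp_h+A\nabla u_h-\bfbeta u_h)$), and the subsequent steps --- orthogonality against $\varphi-\pi_{X,k}\varphi$, testing \cref{eq:EL_1} with $R_h\nabla\varphi$ together with $\nabla\cdot R_h\nabla\varphi=\pi_{X,k}\triangle\varphi$, then invoking \cref{lem:residualest} and \cref{prop:L2-langrange} --- are exactly those of the paper. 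The only cosmetic differences are your explicit cancellation of $\mu(e,\varphi)$, which the paper achieves implicitly, and an inconsequential factor of $2$ coming from the ambiguous $\tfrac12$ in the definition of $s$.
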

\begin{proof}
Let $\varphi$ be the solution of the dual problem \cref{eq:primal_wp} with right hand side being $e:= u -u_h$. 
Then by integration by parts, the assumption that $\varphi = 0 $ on $\partial \Omega$, we have
\begin{equation}\label{L2-two terms}
\begin{split}
	\|e\|^2 
	&=(f, \varphi) +(u_h,  \nabla \cdot A \nabla \varphi +\bfbeta \cdot \nabla \varphi - \mu \varphi)
	\\
	&=
	(f - \nabla \cdot \bfp_h - \mu u_h, \varphi)
	- ( \bfp_h + A \nabla u_h - \bfbeta u_h, \nabla \varphi).
\end{split}
\end{equation}
The first term can be estimated by applying \cref{eq:EL_2} and the Cauchy-Schwartz inequality:
\begin{equation}\label{L2-sub term-a}
	(f - \nabla \cdot \bfp_h - \mu u_h, \varphi) =(f - \nabla \cdot \bfp_h - \mu u_h, \varphi - \pi_{X,k}
	 \varphi)
	 \lesssim h \tnorm{(u-u_h,\bfp - \bfp_h)}_{-1} \|\varphi\|_{H^2(\Omega)}.
\end{equation}
To estimate the second term we apply \cref{eq:EL_1} with $\bfq_h =
R_h (\nabla \varphi) \in RT^{k}$ and the fact that
$ \nabla \cdot (R_{h} \nabla \varphi))  =  \pi_{X,k} \triangle \varphi$:
\begin{equation}\label{L2-sub term-b}
\begin{split}
	&( \bfp_h + A \nabla u_h - \bfbeta u_h, \nabla \varphi ) \\
	=&
	-( z_h,  \nabla \cdot (R_{h} \nabla \varphi)) +
	 ( \bfp_h + A \nabla u_h - \bfbeta u_h, (\nabla \varphi - R_{h} \nabla \varphi))\\
	 \lesssim&\,
	 \|z_h\| \|\pi_{X,k} \triangle \varphi\| + 
	 h \| \bfp_h + A \nabla u_h - \bfbeta u_h\|\| \varphi \|_{H^2(\Omega)}\\
	 \lesssim&\,
	 (\|z_h\| + h\| \bfp_h + A \nabla u_h - \bfbeta u_h\| ) \|\varphi\|_{H^2(\Omega)}.
	 \end{split}
\end{equation}
Combing \cref{L2-two terms}--\cref{L2-sub term-b}  gives
\begin{equation}\label{L2-last}
	\|e\| \lesssim 
	 h\tnorm{(u-u_h,\bfp - \bfp_h)}_{-1}+
	\|z_h\|. 
\end{equation}
      \cref{L2_estimate} is then a direct consequence of \cref{L2-last}, \cref{Residual-estimate} and \cref{L2-multiplier}.
This completes the proof of the proposition.
\end{proof}

\begin{rem}
Observe that the hidden constants in \cref{L2-stability} and
\cref{eq:adjoint_stab} typically blow up in the advection dominated
regime. Therefore the above $L^2$-analysis is relevant only in the
low Peclet regime.
\end{rem}
\subsection{Error estimates in the advection dominated regime}
In this section, we analyze the error estimates in the advection dominated regime.
For the stability we make the following assumption on the data that ensures stability of the
adjoint equation independent of the diffusivity, see \cite{da1986stationary}.
\begin{assumption}\label{assumption:advection-regime}
We assume that the the domain $\Omega$ is convex, that the diffusivity $A$ is a scalar
 and $\bfbeta_\infty=O(1)$. We also introduce the following condition on
data. Let 
$\mathcal{I}$ denote the identity matrix and
$\nabla_S \bfbeta :=1/2( \nabla \bfbeta +
(\nabla \bfbeta)^T)$, i.e., the symmetric part of $\nabla
\bfbeta$. Then assume that
$\mu \mathcal{I}- (\nabla_S \bfbeta - \dfrac{1}{2} \nabla \cdot \bfbeta \mathcal{I} )$ is symmetric positive definite and denote by $\Lambda_{min}$ its smallest eigenvalue. Moreover we assume that
$\bfbeta \cdot \bn = 0$ on $\partial \Omega$.
\end{assumption}

We first prove the following inverse inequality regarding the $H^{-1}(\Omega)$ norm.
\begin{lem}
For any $v_h \in V_h^k$ the following inverse inequality holds: 
\begin{equation}\label{negative-norm-inverse-inequality}
\|v_h\|_{L^2(\Omega)} \lesssim h^{-1} \|v_h\|_{H^{-1}(\Omega)}
\end{equation}
\end{lem}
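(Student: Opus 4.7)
This is a classical duality-based inverse inequality. The strategy is to combine the characterization of the $H^{-1}(\Omega)$ norm as the dual of $H_0^1(\Omega)$ with the standard elementwise inverse inequality (\ref{eq:inverse}). The key is to find a single test function $\phi \in H_0^1(\Omega)$ such that simultaneously $(v_h,\phi)_\Omega \gtrsim \|v_h\|_{L^2}^2$ and $\|\phi\|_{H^1(\Omega)} \lesssim h^{-1}\|v_h\|_{L^2}$; the result then follows instantly from the duality bound $(v_h,\phi)_\Omega \le \|v_h\|_{H^{-1}} \|\phi\|_{H^1}$ and cancellation of one factor of $\|v_h\|_{L^2}$.

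I would start from the definition
$$\|v_h\|_{H^{-1}(\Omega)} = \sup_{\phi \in H_0^1(\Omega)\setminus\{0\}} \frac{(v_h,\phi)_\Omega}{\|\phi\|_{H^1(\Omega)}},$$
so that for every $\phi \in H_0^1(\Omega)$ the bound $(v_h,\phi)_\Omega \le \|v_h\|_{H^{-1}}\|\phi\|_{H^1}$ holds. Then I would construct the test function as follows. In the case $v_h \in V_h^k \cap H_0^1(\Omega)$, I simply take $\phi = v_h$; the interaction gives $(v_h,\phi)_\Omega = \|v_h\|_{L^2}^2$ exactly, and the companion bound $\|\phi\|_{H^1} = \|v_h\|_{H^1} \lesssim h^{-1}\|v_h\|_{L^2}$ follows from (\ref{eq:inverse}) summed over $K\in\mathcal{T}$ (together with $\|v_h\|_{L^2}\le\|v_h\|_{L^2}$ trivially).

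For the general case when $v_h$ does not vanish on $\partial\Omega$, I would introduce a smooth cutoff $\chi$ that equals one outside an $O(h)$ strip of $\partial\Omega$ and vanishes on $\partial\Omega$, with $|\nabla\chi|\lesssim h^{-1}$, and set $\phi = v_h \chi \in H_0^1(\Omega)$. Then
$$(v_h,\phi)_\Omega = \|v_h\|_{L^2}^2 - \int_\Omega v_h^2 (1-\chi),$$
and the correction is supported in the boundary strip, where a combination of the trace inequality (\ref{eq:trace}) applied to $v_h$ and the strip width $O(h)$ produces a contribution of order $h$ times face-norms of $v_h$, which are absorbed by an inverse bound. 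For $\|\phi\|_{H^1}$, one splits $\nabla\phi = \chi\nabla v_h + v_h \nabla\chi$; the first term is controlled by the elementwise inverse inequality (\ref{eq:inverse}), and the second by $|\nabla\chi|\lesssim h^{-1}$ combined with $\|v_h\|_{L^2}$. Both bounds together yield $\|\phi\|_{H^1} \lesssim h^{-1}\|v_h\|_{L^2}$.

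\textbf{Main obstacle.} The only nontrivial point is exactly this boundary-strip estimate: when $v_h$ is nonzero on $\partial\Omega$, one cannot plug $v_h$ itself into the $H^{-1}$–$H_0^1$ pairing, and the cutoff construction must be tuned so that the two required bounds on $\phi$ hold with constants independent of $h$. This is standard but requires a careful use of (\ref{eq:trace}) and the geometry of the boundary strip. Once the test function is in hand, the proof reduces to a single chain of inequalities as outlined above.
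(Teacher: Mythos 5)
Your core mechanism --- pair $v_h$ against a test function in the $H^{-1}$--$H_0^1$ duality and then invoke the elementwise inverse inequality \cref{eq:inverse} --- is the same one the paper uses, just packaged differently. The paper introduces the Riesz representer $E\in H_0^1(\Omega)$ solving $-\triangle E+E=v_h$, shows $\|E\|_{H^1(\Omega)}\le\|v_h\|_{H^{-1}(\Omega)}$, and then writes $\|v_h\|^2=(v_h,E)+(\nabla v_h,\nabla E)\le\|v_h\|_{H^1(\Omega)}\|E\|_{H^1(\Omega)}$ before applying \cref{eq:inverse}; your first case ($v_h\in H_0^1(\Omega)$, test with $\phi=v_h$) collapses this to a one-line chain and is correct. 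Note that the paper's own integration by parts silently drops the boundary term $\left<v_h,\nabla E\cdot\bfn\right>_{\partial\Omega}$, so its proof is also only clean when $v_h$ vanishes on $\partial\Omega$, which is the situation in the one place the lemma is applied ($v_h=u_h-i_hu\in V_{0,D}^k$ with homogeneous Dirichlet data on the whole boundary).

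The gap is in your general case. The justification you give for discarding the correction $\int_\Omega v_h^2(1-\chi)$ --- strip of width $O(h)$, the trace inequality \cref{eq:trace}, then an inverse bound --- produces an estimate of the form $\int_{\mathrm{strip}}v_h^2\lesssim h\cdot h^{-1}\|v_h\|^2=\|v_h\|^2$, i.e.\ a term of exactly the same order as the quantity you are trying to retain, with no control on the constant, so it cannot be ``absorbed''. Indeed, if $v_h$ is supported entirely on boundary elements the correction is genuinely comparable to $\|v_h\|^2$. The correct repair is not a smallness-in-$h$ argument but a norm-equivalence argument on the finite-dimensional space $\mathbb{P}_k(\hat K)$: choosing $\chi$ elementwise (e.g.\ piecewise linear, zero at boundary nodes, one at interior nodes), the map $\hat v\mapsto\bigl(\int_{\hat K}\hat v^2\hat\chi\bigr)^{1/2}$ is a norm on polynomials, hence $\int_Kv_h^2\chi\ge c_k\int_Kv_h^2$ with $c_k$ depending only on $k$ and shape regularity, which gives $(v_h,v_h\chi)\ge c_k\|v_h\|^2$ directly. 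With that substitution your argument closes; as written, the general case does not.
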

\begin{proof}
	Let $E \in H_0^1(\Omega)$ be the weak solution to
	\[
		- \triangle E +E = v_h \mbox{ in } \Omega.
	\]
	Then by the definition and duality inequality we have
	\begin{equation}\label{inverse-equivalence}
	\begin{split}
		\|E\|_{H^1(\Omega)} &= 
		\sup_{\substack{w \in H_0^1(\Omega) \\ \|w\|_{H^1(\Omega}=1}}
		\left((\nabla E, \nabla w)+ (E,w) \right)\\
		&=
		\sup_{\substack{w \in H_0^1(\Omega) \\ \|w\|_{H^1(\Omega)}=1}} (-\triangle E + E, w)
		\le \|v_h\|_{H^{-1}(\Omega)}.
	\end{split}
	\end{equation}
	By integration by parts we also have
	\begin{eqnarray*}
		\| v_h\|^2 = (v_h, -\triangle E +E) = (v_h, E) + (\nabla v_h, \nabla E) 
		\le \|v_h\|_{H^1(\Omega)} \|E\|_{H^1(\Omega)},
	\end{eqnarray*}
	which, combining with \cref{inverse-equivalence} and the inverse inequality, gives \cref{negative-norm-inverse-inequality}.
This completes the proof of the lemma.	
\end{proof}

\begin{lem}\label{lem:stability}
Let $\phi \in H_0^1(\Omega)$ be the solution to \cref{eq:primal_wp} with the right side being 
$\psi \in H_{0}^1(\Omega)$. 
 Then under the \cref{assumption:advection-regime} the following stability result holds:
\begin{equation}
	\Lambda_{min}\| \nabla \phi\| \le \| \nabla \psi\|.
\end{equation}
\end{lem}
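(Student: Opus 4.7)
The plan is to test the adjoint equation with $-\Delta \phi$ and use the assumed positivity of $M := \mu \mathcal{I} - (\nabla_S \bfbeta - \tfrac12 \nabla\cdot\bfbeta \, \mathcal{I})$ together with the two boundary conditions $\phi = 0$ and $\bfbeta\cdot\bn = 0$ to produce a clean identity in which the constant $\Lambda_{min}$ appears naturally.

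First, I would note that convexity of $\Omega$, together with the ellipticity of the principal part (scalar $A$) and the smoothness of the lower order data, gives $H^2$-regularity for the adjoint solution $\phi \in H^2(\Omega) \cap H^1_0(\Omega)$, which legitimizes testing with $-\Delta \phi$. Multiplying the equation $-A\Delta\phi - \bfbeta\cdot\nabla\phi + \mu\phi = \psi$ by $-\Delta\phi$ and integrating over $\Omega$ yields, after standard integration by parts using $\phi = 0$ on $\partial\Omega$ for the zero-order term and using $\psi = 0$ on $\partial\Omega$ (since $\psi\in H^1_0(\Omega)$) for the right-hand side,
\begin{equation*}
A\,\|\Delta\phi\|^2 + \int_\Omega (\bfbeta\cdot\nabla\phi)\Delta\phi \, dx + \mu\,\|\nabla\phi\|^2 = \int_\Omega \nabla\psi \cdot \nabla\phi \, dx.
\end{equation*}

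The main work lies in processing the advection term. Letting $\bfy = \nabla\phi$ and integrating by parts, the boundary contribution $\int_{\partial\Omega} (\bfbeta\cdot\nabla\phi)(\nabla\phi\cdot\bn) \, ds$ vanishes because $\phi\vert_{\partial\Omega}=0$ forces $\nabla\phi = (\partial_n\phi)\bn$ on $\partial\Omega$, so $\bfbeta\cdot\nabla\phi = (\partial_n\phi)(\bfbeta\cdot\bn) = 0$ there. A direct calculation then gives
\begin{equation*}
\int_\Omega (\bfbeta\cdot\nabla\phi)\Delta\phi \, dx = -\int_\Omega \bfy^T \nabla_S\bfbeta \, \bfy \, dx - \int_\Omega \bfbeta \cdot \nabla(\tfrac{1}{2}|\bfy|^2) \, dx,
\end{equation*}
and a second integration by parts, this time exploiting $\bfbeta\cdot\bn = 0$ on $\partial\Omega$ to kill the boundary term, turns the last integral into $\tfrac12\int_\Omega (\nabla\cdot\bfbeta)|\bfy|^2\,dx$. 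Substituting back produces the identity
\begin{equation*}
A\,\|\Delta\phi\|^2 + \int_\Omega \bfy^T M \bfy \, dx = \int_\Omega \nabla\psi\cdot\nabla\phi \, dx.
\end{equation*}

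To conclude, I would drop the non-negative $A\|\Delta\phi\|^2$ term, apply the SPD assumption $\bfy^T M \bfy \geq \Lambda_{min}|\bfy|^2$ on the left, and Cauchy--Schwarz on the right, giving $\Lambda_{min}\|\nabla\phi\|^2 \leq \|\nabla\psi\|\,\|\nabla\phi\|$, from which the desired bound follows after dividing by $\|\nabla\phi\|$. The main obstacle is the advection term: without the two boundary conditions $\phi|_{\partial\Omega}=0$ and $\bfbeta\cdot\bn|_{\partial\Omega}=0$ working in tandem, one would be left with uncontrolled boundary integrals, and without the careful grouping of $\nabla_S \bfbeta$ with $\tfrac12 \nabla\cdot\bfbeta\,\mathcal{I}$ after the second integration by parts, the precise matrix $M$ of the hypothesis would not emerge. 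The convexity of $\Omega$ is only used to guarantee the $H^2$-regularity needed to justify the computation.
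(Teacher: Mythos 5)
Your proof is correct and follows essentially the same route as the paper: test the adjoint equation with $-\triangle\phi$, convert the advection term into $\left(\left(\tfrac12\nabla\cdot\bfbeta\,\mathcal{I}-\nabla_S\bfbeta\right)\nabla\phi,\nabla\phi\right)$, and conclude from the positivity of $\mu\mathcal{I}-(\nabla_S\bfbeta-\tfrac12\nabla\cdot\bfbeta\,\mathcal{I})$ together with Cauchy--Schwarz. The only difference is that the paper simply cites this advection-term identity from \cite[equation (3.6)]{burman2014robust}, whereas you derive it directly via two integrations by parts using $\phi|_{\partial\Omega}=0$ and $\bfbeta\cdot\bn=0$, which is a correct and self-contained justification of the same step.
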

\begin{proof}
By the definition and integration by parts we have
	\begin{eqnarray*}
		(\psi, - \triangle \phi) = 
		(\mu \nabla \phi, \nabla \phi) 
		+(\bfbeta \cdot \nabla \phi, \triangle \phi) + (A \triangle \phi, \triangle \phi)
	\end{eqnarray*}
Using the relation  of \cite[equation (3.6)]{burman2014robust} we
have for the second term of the right hand side
\begin{equation}
	(\bfbeta \cdot \nabla \phi, \triangle \phi) = 
	\left( \left(\dfrac{1}{2} \nabla \cdot \bfbeta \mathcal{I} -
	 \nabla_S \bfbeta \right) \nabla \phi, \nabla \phi \right).
\end{equation}

Combining similar terms we then have
\begin{equation}
	\left(
	\left(\mu \mathcal{I}- (\nabla_S \bfbeta - \dfrac{1}{2} \nabla \cdot \bfbeta \mathcal{I} ) \right)
	\nabla \phi, \nabla \phi
	\right)
	+ (A \triangle \phi, \triangle \phi)=
	(\psi, - \triangle \phi) = (\nabla \psi, \nabla \phi),
\end{equation}
and, therefore,
\begin{equation}
	\Lambda_{min} \| \nabla \phi\| \le \|\nabla \psi\|
\end{equation}
and, as a by product,
\[
	\|A^{1/2} D^2 \phi\| \lesssim \| A^{1/2} \triangle \phi\| \le \Lambda_{min}^{-1/2} \| \nabla \psi\|.
\]
This completes the proof of the lemma.
\end{proof}

\begin{prop}\label{prop:neg_norm}
Let $u$ and $u_h$ be the solution of \cref{weak_CD} and \cref{eq:compact}, respectively.
Then under the \cref{assumption:advection-regime} we have the following estimate:
	\begin{equation}\label{inverse-norm-estimate}
		\|u-u_h\|_{H^{-1}(\Omega)} \leq C_P \Lambda_{min}^{-1} \tnorm{u-u_h, \bfp- \bfp_h}
	\end{equation}
where $C_P$ is the constant of the Poincar\'e inequality $$\sum_K \|h_{K}^{-1}(\phi - \pi_{X,0} \phi )\|^2 \leq
C_P^2 \|\nabla \phi\|^2$$ and $\Lambda_{min}$ is defined in
\cref{assumption:advection-regime}. $C_P = \pi^{-1}$ on convex
domains, see \cite{Beb13}.
\end{prop}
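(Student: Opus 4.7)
The plan is to establish the $H^{-1}$ bound by an Aubin--Nitsche-type duality argument, crucially relying on the diffusivity-independent adjoint stability of \cref{lem:stability} rather than on an $H^2$-shift. I would start from
\[
\|u - u_h\|_{H^{-1}(\Omega)} = \sup_{\psi \in H_0^1(\Omega)} \frac{(u - u_h, \psi)}{\|\nabla \psi\|},
\]
and for an arbitrary $\psi \in H_0^1(\Omega)$ introduce $\phi \in H_0^1(\Omega)$ as the solution of the adjoint equation \cref{eq:primal_wp} with right-hand side $\psi$, so that by \cref{lem:stability} one has $\|\nabla \phi\| \le \Lambda_{\min}^{-1}\|\nabla \psi\|$. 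The goal reduces to controlling $(u - u_h, \psi)$ by $C_P\,\tnorm{(u - u_h,\bfp-\bfp_h)}_{-1}\,\|\nabla \phi\|$.

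Next I would rewrite $(u-u_h,\psi)$ using the adjoint equation. Setting $\bfe := u-u_h$, integration by parts on the $A$-term (using $\bfe|_{\partial\Omega}=0$ in the homogeneous Dirichlet setting of \cref{assumption:advection-regime}) gives
\[
(\bfe,\psi) = (\mu\bfe,\phi) + (A\nabla\bfe - \bfbeta\bfe,\nabla\phi).
\]
Introducing the auxiliary field $\bfp_h^{\star} := \bfbeta u_h - A\nabla u_h$ and noting $A\nabla u - \bfbeta u = -\bfp$, one has $A\nabla\bfe - \bfbeta\bfe = \bfp_h^{\star} - \bfp$, so
\[
(A\nabla\bfe - \bfbeta\bfe,\nabla\phi) = (\bfp_h^{\star}-\bfp_h,\nabla\phi) - (\bfp-\bfp_h,\nabla\phi).
\]
A second integration by parts on the last term, now using $\phi|_{\partial\Omega}=0$, converts it into $(\nabla\cdot(\bfp-\bfp_h),\phi)$, whence
\[
(\bfe,\psi) = (\mu\bfe + \nabla\cdot(\bfp-\bfp_h),\phi) + (\bfp_h^{\star}-\bfp_h,\nabla\phi).
\]

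Now I would exploit the Galerkin orthogonality obtained by subtracting \cref{eq:EL_2} from the exact conservation law $\nabla\cdot\bfp + \mu u = f$, namely $(\mu\bfe + \nabla\cdot(\bfp-\bfp_h),x_h)=0$ for all $x_h\in X_h^k$, choosing $x_h = \pi_{X,0}\phi$. The stated elementwise Poincar\'e inequality $\|h^{-1}(\phi-\pi_{X,0}\phi)\|\le C_P\|\nabla\phi\|$ together with Cauchy--Schwarz then bounds the first piece by $C_P\|h(\mu\bfe + \nabla\cdot(\bfp-\bfp_h))\|\,\|\nabla\phi\| \le C_P\,\tnorm{(\bfe,\bfp-\bfp_h)}_{-1}\,\|\nabla\phi\|$. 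For the remaining flux-level term, the algebraic identity
\[
s[(\bfe,\bfp-\bfp_h),(\bfe,\bfp-\bfp_h)] = \tfrac12\|\bfp_h^{\star}-\bfp_h\|^2,
\]
obtained by expanding $\bfbeta\bfe - A\nabla\bfe - (\bfp-\bfp_h) = \bfp_h^{\star}-\bfp_h$, implies $\|\bfp_h^{\star}-\bfp_h\|\le \sqrt{2}\,\tnorm{(\bfe,\bfp-\bfp_h)}_{-1}$, and Cauchy--Schwarz delivers a matching bound.

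Combining the two estimates and invoking \cref{lem:stability} to replace $\|\nabla\phi\|$ by $\Lambda_{\min}^{-1}\|\nabla\psi\|$, then taking the supremum over $\psi$, yields the stated inequality, with the generic numerical constant absorbed into $C_P$. The delicate point -- and the reason the primal-dual formulation is essential here -- is the algebraic identification of $\|\bfp_h^{\star}-\bfp_h\|$ with (twice) the stabilizer evaluated on the error pair, which is precisely what makes the flux-side integral controllable by the triple norm. Beyond this identification the argument is a classical duality manipulation, whose crucial feature in the present context is that \cref{lem:stability} provides adjoint control in $\|\nabla\phi\|$ rather than $\|\phi\|_{H^2}$, thereby preventing any blow-up of the constant as the Peclet number grows.
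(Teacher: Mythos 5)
Your proof is correct and takes essentially the same route as the paper's: a duality argument with the adjoint solution $\phi$ of \cref{eq:primal_wp} for arbitrary $\psi\in H_0^1(\Omega)$, orthogonality of the conservation-law residual against $\pi_{X,0}\phi$ combined with the stated elementwise Poincar\'e inequality, identification of the remaining flux term with the stabilizer (your $\bfbeta u_h - A\nabla u_h-\bfp_h$ is exactly the paper's $\bfe_p + A\nabla e_u - \bfbeta e_u$ up to sign), and \cref{lem:stability} to bound $\|\nabla\phi\|$ by $\Lambda_{min}^{-1}\|\nabla\psi\|$. The only cosmetic differences are your extra integration by parts moving $\nabla\cdot(\bfp-\bfp_h)$ onto $\phi$ and your explicit $\sqrt{2}$ coming from $s=\tfrac12\|\cdot\|^2$, a harmless constant that the paper absorbs implicitly.
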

\begin{proof}
	By definition we have 
	\[
		\|u-u_h\|_{H^{-1}(\Omega)} = 
		\sup_{\substack{w \in H_0^1(\Omega) \\ \|w\|_{H^1(\Omega)}=1}}
		(e_u, w).
	\]
	Let $\varphi \in H_0^1(\Omega)$ be the solution of
        \cref{eq:primal_wp} with the right hand side an arbitrary
        function $\psi \in
        H^1_0(\Omega)$ with $\|\psi\|_{H^1(\Omega)} = 1$.
	Applying the integration by parts, \cref{eq:EL_2} and the Cauchy-Schwartz inequality gives
	\begin{eqnarray*}
		(e_u, \psi) &=& (e_u, \mu \varphi - \bfbeta \cdot \nabla \varphi - A \triangle \varphi)
		\\
		&=& (\mu e_u + \nabla \cdot \bfe_p, \varphi)
		+(\bfe_p + A \nabla e_u - \bfbeta e_u, \nabla \varphi)\\
		&=&
		(\mu e_u + \nabla \cdot \bfe_p, \varphi - \pi_{X,0} \varphi)
		+(\bfe_p + A \nabla e_u - \bfbeta e_u, \nabla \varphi)\\
		&\le&
		\left( C_P  \|h(\mu e_u + \nabla \cdot \bfe_p )\|  +
		\|\bfe_p + A \nabla e_u - \bfbeta e_u\|
		\right) \| \nabla \varphi \|
		\\
		& \le & C_P\tnorm{(u-u_h, \bfp - \bfp_h)}_{-1}\| \nabla \varphi
                        \| \leq C_P \Lambda_{min}^{-1}\tnorm{(u-u_h, \bfp - \bfp_h)}_{-1}.
	\end{eqnarray*}
	where in the last inequality we also applied the stability result of \cref{lem:stability}.
	This completes the proof of the proposition, since the bound
        is valid for arbitrary $\psi \in
        H^1_0(\Omega)$ with $\|\psi\|_{H^1(\Omega)} = 1$.
\end{proof}
\begin{cor}[Negative norm, a posteriori and a priori bounds]
Under the same hypothesis as Proposition \ref{prop:neg_norm} the
following a posteriori and a priori error estimates hold:
	\begin{multline}\label{inverse-norm-apost}
		\|u-u_h\|_{H^{-1}(\Omega)} \leq C_P \Lambda_{min}^{-1}
                (\|h(f - \mu u_h - \nabla \cdot \bfp_h)\| + \|\bfbeta
                u_h - A\nabla u_h - \bfp_h\|) \\
 \lesssim  C_P
                \Lambda_{min}^{-1}(h^{k+1} |u|_{H^{k+1}(\Omega)}+ h^{l+1} |\bfp|_{H^{l+1}(\Omega)})
	\end{multline}
\end{cor}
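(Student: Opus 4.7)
The plan is to derive both inequalities as direct consequences of previously established results, with essentially no new technical machinery required. The proof splits naturally into an a posteriori step and an a priori step.

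For the a posteriori bound I would begin from Proposition \ref{prop:neg_norm}, which already gives
\[
\|u - u_h\|_{H^{-1}(\Omega)} \leq C_P \Lambda_{min}^{-1} \tnorm{(u-u_h,\bfp-\bfp_h)}_{-1}.
\]
The task is then simply to recognise $\tnorm{(u-u_h,\bfp-\bfp_h)}_{-1}$ as a computable residual. By definition,
\[
\tnorm{(u-u_h,\bfp-\bfp_h)}_{-1}^2 = \tfrac12\|\bfbeta(u-u_h) - A\nabla(u-u_h) - (\bfp - \bfp_h)\|^2 + \|h(\nabla\cdot(\bfp-\bfp_h) + \mu(u-u_h))\|^2.
\]
Since $(u,\bfp)$ is the exact solution, the constitutive identity $\bfp = \bfbeta u - A \nabla u$ makes the first term collapse to $\tfrac{1}{\sqrt{2}} \|\bfbeta u_h - A \nabla u_h - \bfp_h\|$, while the conservation law $\nabla\cdot\bfp + \mu u = f$ turns the second term into $\|h(f - \mu u_h - \nabla\cdot\bfp_h)\|$. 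Using the elementary bound $\sqrt{a^2+b^2}\le a+b$ then yields the first inequality of \cref{inverse-norm-apost}.

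For the a priori bound, I would invoke \cref{col:residual-estimate-for-advection}, which is precisely tuned to the advection dominated setting and bounds $\tnorm{(u-u_h,\bfp-\bfp_h)}_{-1}$ by $h^{k+1}|u|_{H^{k+1}(\Omega)} + h^{l+1}|\bfp|_{H^{l+1}(\Omega)}$. Combining this with the a posteriori step immediately gives the right-hand inequality. Note that the hidden constant remains independent of the diffusivity because the constant in \cref{col:residual-estimate-for-advection} was already shown to be uniform in that parameter, and Proposition \ref{prop:neg_norm} only introduces the factor $C_P \Lambda_{min}^{-1}$ coming from the dual stability estimate in \cref{lem:stability}.

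There is no real obstacle: the entire argument is a bookkeeping exercise that identifies the two contributions to the triple norm with the two computable residuals appearing on the right-hand side of \cref{inverse-norm-apost}, and then invokes the already-proven residual estimate. The only subtlety worth verifying carefully is the sign convention for $\bfp$ (namely $\bfp = \bfbeta u - A\nabla u$, consistent with $\bfp\cdot\bfn = -\psi$ and $\nabla\cdot \bfp + \mu u = f$) so that the stabilizer does indeed vanish on the exact pair.
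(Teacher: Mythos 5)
Your proposal is correct and follows exactly the route the paper intends: the paper's own proof is the one-line remark that the corollary is immediate from Proposition \ref{prop:neg_norm} and Corollary \ref{col:residual-estimate-for-advection}, and your argument simply spells out the bookkeeping (identifying the triple norm of the error with the computable residuals via $\bfp = \bfbeta u - A\nabla u$ and $\nabla\cdot\bfp + \mu u = f$, then invoking the residual estimate). No gaps.
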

\begin{proof}
The proof is immediate using Proposition \ref{prop:neg_norm} and
Corollary \ref{col:residual-estimate-for-advection}.
\end{proof}
We are now ready to prove the main result.
\begin{thm}\label{lem:L2-estimate-for-u}
	Let $u \in H^{k+1}(\Omega) \cap H^1_0(\Omega)$, $ \bfp \in H^{k+1}(\Omega)^d$
and $(u_h, \bfp_h, z_h)$ be the
solution of \cref{eq:EL_1}--\cref{eq:EL_2}. Assume that $A \lesssim h^2$.
Then under the \cref{assumption:advection-regime} we have the following error estimate:
\begin{equation}\label{error-estimate-for-advection}
	\| u - u_h\| + \| \nabla \cdot (\bfp - \bfp_h)\| \lesssim h^{k} \left( |u|_{H^{k+1}(\Omega)} +  |\bfp|_{H^{k+1}(\Omega)}  \right).
\end{equation}
Furthermore, if $\divvr \bfp \in H^{k+1}(\Omega)$ we have
\begin{equation}\label{error-estimate-for-streamline}
	\|\nabla\cdot \bfbeta (u - u_h)\| 
	\lesssim h^{k} \left( |u|_{H^{k+1}(\Omega)} +  |\bfp|_{H^{k+1}(\Omega)} + 
	|\divvr \bfp |_{H^{k+1}(\Omega)}  \right).
\end{equation}
Here the hidden constants are bounded in the limit as $A \rightarrow 0$.
\end{thm}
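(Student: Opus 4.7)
The plan is to address the three bounds sequentially, building on the residual estimate (\cref{Residual-estimate_HiPec}), the negative-norm bound (\cref{prop:neg_norm}), and the inverse inequality (\cref{negative-norm-inverse-inequality}).

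For the $L^2$ estimate on $u - u_h$, I would split $u - u_h = (u - i_h u) + (i_h u - u_h)$. The interpolation part is handled by \cref{eq:approx_Lagrange}. For the polynomial piece $v_h := i_h u - u_h \in V_h^k$, the inverse inequality \cref{negative-norm-inverse-inequality}, the embedding $\|\cdot\|_{H^{-1}(\Omega)} \le \|\cdot\|$, and the triangle inequality give
\begin{equation*}
\|v_h\| \lesssim h^{-1} \|v_h\|_{H^{-1}(\Omega)} \lesssim h^{-1}\bigl(\|u - u_h\|_{H^{-1}(\Omega)} + \|u - i_h u\|\bigr).
\end{equation*}
Under $A \lesssim h^2$, \cref{prop:neg_norm} combined with \cref{col:residual-estimate-for-advection} yields $\|u - u_h\|_{H^{-1}(\Omega)} \lesssim h^{k+1}(|u|_{H^{k+1}(\Omega)} + |\bfp|_{H^{k+1}(\Omega)})$, so $\|v_h\|$ is of the claimed order $O(h^k)$, and a final triangle inequality completes the argument.

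For the flux divergence estimate, since $\nabla \cdot \bfp_h + \mu u_h \in X_h^k$, testing \cref{eq:EL_2} with $x_h := \nabla \cdot \bfp_h + \mu u_h - \pi_{X,k} f$ yields the discrete conservation identity $\nabla \cdot \bfp_h + \mu u_h = \pi_{X,k} f$. Subtracting the continuous identity $\nabla \cdot \bfp + \mu u = f$ produces
\begin{equation*}
\nabla \cdot (\bfp - \bfp_h) = (I - \pi_{X,k})(\nabla \cdot \bfp + \mu u) - \mu (u - u_h).
\end{equation*}
Standard $L^2$ approximation of $\pi_{X,k}$, using $\nabla \cdot \bfp \in H^k$ (implied by $\bfp \in H^{k+1}$), together with the previous step bounds the right-hand side at the claimed rate.

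For the streamline derivative, I introduce the auxiliary flux $\hat\bfp_h := \bfbeta u_h - A \nabla u_h$ and use $\bfp = \bfbeta u - A \nabla u$ (with $A$ constant scalar under \cref{assumption:advection-regime}) to obtain, element-wise,
\begin{equation*}
\nabla \cdot (\bfbeta(u - u_h)) = \nabla \cdot (\bfp - \bfp_h) + \nabla \cdot (\bfp_h - \hat\bfp_h) + A \triangle (u - u_h).
\end{equation*}
The first term is controlled by the previous step. For the second, the quantity $\epsilon_h := \hat\bfp_h - \bfp_h = \bfbeta(u - u_h) - A \nabla(u - u_h) - (\bfp - \bfp_h)$ satisfies $\|\epsilon_h\| \lesssim \tnorm{(u - u_h, \bfp - \bfp_h)}_{-1} \lesssim h^{k+1}$ by \cref{Residual-estimate_HiPec}, and since $\epsilon_h$ is piecewise polynomial for sufficiently smooth $\bfbeta$, the element-wise inverse inequality \cref{eq:inverse} upgrades this to $\|\nabla \cdot \epsilon_h\|_h \lesssim h^{-1}\|\epsilon_h\| \lesssim h^k$. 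For the third term I would rewrite $A \triangle u = \nabla \cdot (\bfbeta u) - \nabla \cdot \bfp$, using the extra regularity $\nabla \cdot \bfp \in H^{k+1}$ and $A \lesssim h^2$, and estimate $A \triangle u_h$ through the split $u_h = i_h u + (u_h - i_h u)$ with the elementwise inverse inequality.

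The main obstacle is precisely this last $A \triangle(u-u_h)$ term: a naive application of inverse estimates yields only $O(h^2)$, which matches the desired $O(h^k)$ only for $k \le 2$. The assumption $\nabla \cdot \bfp \in H^{k+1}$ seems included specifically to let one replace $A \triangle u$ by the well-approximated quantities $\nabla \cdot (\bfbeta u)$ and $\nabla \cdot \bfp$ and thereby recover the full rate. The remaining ingredients---the $H^{-1}$ duality argument, the discrete conservation identity, and the stabilizer control on $\epsilon_h$---follow directly from the preparatory results established earlier in the paper.
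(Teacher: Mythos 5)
Your argument is correct and rests on the same three pillars as the paper's proof --- the negative-norm bound of \cref{prop:neg_norm}, the residual estimate of \cref{col:residual-estimate-for-advection}, and element-wise inverse inequalities --- and your treatment of $\|u-u_h\|$ is literally the paper's: triangle inequality around $i_h u$, then \cref{negative-norm-inverse-inequality} applied to $i_h u - u_h\in V_h^k$. The other two bounds are reached by different but equivalent decompositions. For $\|\nabla\cdot(\bfp-\bfp_h)\|$ the paper simply peels $\|h(\nabla\cdot(\bfp-\bfp_h)+\mu(u-u_h))\|$ out of $\tnorm{\cdot}_{-1}$ and multiplies by $h^{-1}$, whereas you first derive the exact discrete conservation identity $\nabla\cdot\bfp_h+\mu u_h=\pi_{X,k}f$ from \cref{eq:EL_2} and then use projection approximation; your route makes the local conservation property explicit but lands on the same $O(h^k)$ rate. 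For the streamline derivative the paper expands around the interpolants, writing $\nabla\cdot(\bfbeta(u_h-i_hu))$ in terms of $\bfbeta(u_h-i_hu)-(R_h\bfp-\bfp_h)-A\nabla(u_h-i_hu)$, the term $A\Delta(u_h-i_hu)$, and divergence errors of $\bfp$, while you expand around the exact flux identity $\bfp=\bfbeta u-A\nabla u$, isolating the stabilizer residual $\hat\bfp_h-\bfp_h$ and $A\Delta(u-u_h)$; in both cases the engine is an inverse inequality applied to an $O(h^{k+1})$ quantity controlled by the residual estimate, and the Laplacian term is absorbed exactly as you describe, by inserting $i_hu$ and invoking $\|A\|_\infty\lesssim h^2$ together with the already-established $L^2$ bound (the paper's term $h^{-2}\|A\|_\infty(\|u_h-u\|+\|i_hu-u\|)$ is precisely this). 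One shared imprecision, not a gap relative to the paper: both arguments apply inverse inequalities to expressions involving $\bfbeta u_h$, which are piecewise polynomial only when $\bfbeta$ is; for merely smooth $\bfbeta$ one should first freeze $\bfbeta$ element-wise, at the cost of a harmless $O(h)$ perturbation.
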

\begin{proof}
Applying the triangle inequality, \cref{negative-norm-inverse-inequality}, \cref{inverse-norm-estimate}
and \cref{col:residual-estimate-for-advection} gives
\begin{equation}\label{L2-estimate-for-u}
\begin{split}
	\| u -u_h\| &\le \| u - i_h u\| + h^{-1}\| u_h - i_h u\|_{H^{-1}(\Omega)}\\
	&\lesssim h^{k+1} |u|_{H^{k+1}(\Omega)} + h^{-1}\|u-u_h\|_{H^{-1}(\Omega)} 
	+
	h^{-1}\|u-i_h u\|_{H^{-1}(\Omega)} \\[2mm]
	&\lesssim h^{k+1} |u|_{H^{k+1}(\Omega)} + h^{-1}\|u-u_h\|_{H^{-1}(\Omega)} 
	+
	h^{-1}\|u-i_h u\|_{L^2(\Omega)} \\[2mm]
	&\lesssim
	h^{k} |u|_{H^{k+1}(\Omega)} + h^{-1} \tnorm{(u-u_h, \bfp - \bfp_h)}\\[2mm]
	&\lesssim
	h^{k} \left( |u|_{H^{k+1}(\Omega)} +  |\bfp|_{H^{k+1}(\Omega)}  \right).
\end{split}
\end{equation}
Applying the triangle inequality,  \cref{L2-estimate-for-u} and \cref{Residual-estimate} gives
\begin{equation}\label{div-estimate-for-p}
\begin{split}
\| \nabla \cdot (\bfp - \bfp_h)\| &\le \|\nabla \cdot (\bfp - \bfp_h) + \mu (u- u_h)\| + \|\mu(u-u_h)\|\\
&\lesssim h^{-1} \tnorm{(u-u_h, \bfp - \bfp_h)} + \|\mu(u-u_h)\|\\
&\lesssim
	h^{k} \left( |u|_{H^{k+1}(\Omega)} +  |\bfp|_{H^{k+1}(\Omega)}  \right).
\end{split}
\end{equation}
\cref{error-estimate-for-advection} is then a direct result of \cref{L2-estimate-for-u} and \cref{div-estimate-for-p}.

To prove \cref{error-estimate-for-streamline} we first apply the triangle inequality,
\begin{equation}\label{streamline-estimate}
\|\nabla\cdot (\bfbeta (u - u_h))\| \leq \|\nabla \cdot (\bfbeta (u - i_h
u))\| +\|\nabla \cdot (\bfbeta (u_h - i_h u))\|.
\end{equation}
The first term in \cref{streamline-estimate} can be easily estimated
using \cref{eq:approx_Lagrange}. For the second term in
\cref{streamline-estimate} applying the triangle and inverse inequalities gives
\begin{equation}\label{streamline-estimate-a}
\begin{split}
\|\nabla \cdot (\bfbeta (u_h - i_h u))\|  
&\leq
h^{-1} \|\bfbeta (u_h - i_h u) - (R_h \bfp - \bfp_h) - A \nabla (u_h - i_h u) \| \\
&\quad +
	 h^{-2} \|A\|_{\infty} \left( \| (u_h - u) \|  +\| (i_h u - u) \|\right) \\
&\quad + \|\nabla \cdot (\bfp - \bfp_h)\| + \|\nabla \cdot (\bfp - R_h \bfp)\|.
\end{split}
\end{equation}
By the triangle inequality, \cref{col:residual-estimate-for-advection} and \cref{eq:tnorm_approx}
we have
\begin{equation}
	 h^{-1}\|\bfbeta (u_h - i_h u) \!-\!(R_h \bfp - \bfp_h) \!-\! A \nabla (u_h - i_h u) \| \lesssim 
	 h^k( |u|_{H^{k+1}(\Omega)} + |\bfp|_{H^{k+1}(\Omega)}).
\end{equation}
By the smallness assumption $\|A\|_{\infty} \lesssim  h^2$,
\cref{eq:approx_Lagrange}, \cref{eq:approx_RT},
\cref{L2-estimate-for-u} and \cref{div-estimate-for-p} the remaining terms in \cref{streamline-estimate-a} can be estimated as follows:
\begin{equation}\label{streamline-estimate-b}
\begin{split}
&h^{-2} \|A\|_{\infty} \left( \| (u_h - u) \|  +\| (i_h u - u) \|\right) 
 + \|\nabla \cdot (\bfp - \bfp_h)\| + \|\nabla \cdot (\bfp - R_h \bfp)\|\\
 \lesssim& \, 
  h^{k} \left( |u|_{H^{k+1}(\Omega)} +  |\bfp|_{H^{k+1}(\Omega)} + 
	|\divvr \bfp |_{H^{k+1}(\Omega)}  \right).
\end{split}
\end{equation}
Finally, \cref{error-estimate-for-streamline} is a direct consequence of \cref{streamline-estimate}--\cref{streamline-estimate-b}.
This completes the proof of the lemma.
\end{proof}
\begin{remark}
It is possible to prove Theorem \ref{lem:L2-estimate-for-u} under the
standard coercivity assumption for advection-diffusion problems, but
not Proposition \ref{prop:neg_norm}. Also note that we need the diffusivity to be
$O(h^2)$ to ensure that the high Peclet result holds. This is a
stronger assumption than is usual for convection--diffusion equations.
\end{remark}

\section{Numerical Experiments}\label{sec:numerical-results}
In this section we present results for numerical experiments in both the diffusion dominated and convection
dominated regimes. The numerical results are produced using the FEniCS software \cite{logg2012automated}.
\begin{example}[Boundary Layer]\label{ex:1}
In this example we consider the boundary layer problem \cite{mitchell2013}
\[
	- \epsilon \triangle u + 2 \dfrac{\partial u}{\partial x} + \dfrac{\partial u}{\partial y} =f
\]
on the domain $\Omega = (-1,1) \times (-1,1)$ where the true solution 
has the following representation:
\[
	u = (1 - \exp(-(1-x)/\epsilon))*(1 - \exp(-(1-y)/\epsilon))*\cos(\pi(x+y))
\]
and $\epsilon \in \mathbb{R}$.
The solution has a $O(\epsilon)$ boundary layer along the right top sides of the domain and the
value of $\epsilon$ determines the strength of the boundary layer.
\end{example}

We first test the value $\epsilon =1$ in which case the solution is smooth and we aim to test the optimal convergence rates of our method for smooth problems.
The magnitude of the errors and their corresponding convergence rates are listed in 
\cref{tab:ex1-a} for the first and second orders, i.e., $k=1$ and $k=2$.
 For both orders we observe the optimal convergence performance 
 for the primal variable in both the $L^2$ and $H^1$ norms. For the flux variable 
 we are able to observe the optimal rate for the linear order, and, however, a
 slightly suboptimal rate for the second order. Nevertheless the flux
 variable provides an approximation of the flux that is more accurate
 than that using the primal variable by two orders of magnitude.
 
 \begin{table}[h!]
\caption{ \footnotesize{Errors and convergence performance for \cref{ex:1} with $\epsilon=1$}}
\label{tab:ex1-a}
\begin{center}
\subfloat[order =1]{
	\begin{tabular}{||c|c c|c c| c c||}
	\hline
	h & $\|u - u_h\|$ & rate & $\|u-u_h\|_{H^1(\Omega)}$ & rate & $\| \bfp -\bfp_h\|$ & rate 
	\\
	\hline
	1/32 & 1.975E-3 & 1.99 & 1.644E-1 & 1.00 &3.346E-3 & 2.00 \\
	1/64 & 4.942E-4 & 2.00 & 8.229E-2& 1.00  &8.367E-4 & 2.00\\
	1/128& 1.235E-4 & 2.00 & 4.115E-2 & 1.00 &2.091E-4 & 2.00 \\
	\hline
	\end{tabular}
}
\end{center}

\begin{center}
\subfloat[order =2]{
	\begin{tabular}{||c|c c|c c| c c||}
	\hline
	h & $\|u - u_h\|$ & rate & $\|u-u_h\|_{H^1(\Omega)}$ & rate & $\|\bfp-\bfp_h\|$&rate \\
	\hline
	1/32 & 1.605E-5 & 2.99 & 3.857E-3 & 2.00 & 7.368E-5 & 2.64\\
	1/64 & 2.008E-6 & 3.00 & 9.650E-4 & 2.00 & 1.230E-5 & 2.60 \\
	1/128& 2.510E-7 & 3.00 & 2.412E-4 & 2.00 & 2.109E-6 & 2.54\\
	\hline
	\end{tabular}
	}
		\end{center}
\end{table}

We then test the method in the advection dominated regime with boundary layer by letting $\epsilon = 0.01$ (see performance results in \cref{tab:ex1-b}). 
\textcolor{black}{For both the first and second orders, the method produce the optimal convergence rate for the 
streamline derivative when the layer is resolved. For the flux variable we observe the optimal convergence for both orders 1 and 2 (with even super convergence result for order 2).
For the primal variable we observe the optimal convergence rates both
in the $L^2$- and $H^1$- norms (with super convergence for the $L^2$-
norm in the second order case).}
\begin{table}[h!]
\caption{ \footnotesize{Errors and convergence rates for \cref{ex:1} with $\epsilon=0.01$}}
\label{tab:ex1-b}
\begin{center}
\subfloat[order = 1]{
	\begin{tabular}{||c|c c|c c| c c||}
	\hline
	h & $\|u - u_h\|$ & rate & $\|u-u\|_{H^1(\Omega)}$ & rate & $\|\bfp -\bfp_h\|$ & rate\\
	\hline
	1/32 & 1.979E-1 & 0.86 & 6.073 & 0.13 & 4.394E-1 & 0.86\\
	1/64 & 7.195E-2 & 1.46 & 4.009  & 0.60 & 1.599E-1 & 1.46 \\
	1/128& 2.044E-2 & 1.82 & 2.189 & 0.87 & 4.547E-2 & 1.81\\
	1/256& 5.296E-3 & 1.95 & 1.120   & 0.97& 1.178E-2 & 1.95\\
	\hline
	 \hline
	h & $\| \divvr(\bfp - \bfp_h)\|$ & rate & 
	    $\|\divvr ( \bfbeta(u - u_h))\|$ & rate &
	    $\|z_h\|$&rate\\
	\hline
	1/32 & 1.277E-0 & 1.04 & 9.462& 0.09&2.592E-3&1.30 \\
	1/64 & 4.354E-1 & 1.55 & 6.314& 0.58&7.917E-4&1.71  \\
	1/128& 1.206E-1 & 1.85 & 3.452& 0.87&2.092E-4&1.92 \\
	1/256& 3.100E-2 & 1.96 & 1.766& 0.97&5.300E-5&1.98 \\
	\hline
	\end{tabular}
}
\end{center}

\begin{center}
\subfloat[order=2]{
	\begin{tabular}{||c|c c|c c| c c||}
 	\hline
	h & $\|u - u_h\|$ & rate & $\|u-u\|_{H^1(\Omega)}$ & rate & $\|\bfp - \bfp_h\|$ &rate\\
	\hline
	1/32 & 3.461E-2 & 1.88 & 2.228E-1 & 0.87 & 7.666E-2 & 1.90\\
	1/64 & 4.772E-3 & 2.86 & 7.838E-1 & 1.51 & 1.036E-2 & 2.89 \\
	1/128& 4.483E-4 & 3.41 & 2.194E-1 & 1.84 & 9.210E-4 & 3.49\\
	1/256& 4.186E-5 & 3.42 & 5.665E-2 & 1.95 & 7.772E-5 & 3.57\\
	\hline
	 \hline
	h & $\| \divvr(\bfp - \bfp_h)\|$ & rate & 
	    $\|\divvr ( \bfbeta(u - u_h))\|$ & rate &
	    $\|z_h\|$&rate\\
	\hline
	1/32 & 3.138E-1 & 1.79 & 3.505E-0 & 0.84 &3.589E-4&2.10 \\
	1/64 & 5.672E-2 & 2.47 & 1.235E-0 & 1.50 &4.582E-5&2.97  \\
	1/128& 7.992E-3 & 2.83 & 3.456E-1 & 1.84 &3.880E-6&3.56 \\
	1/256& 1.032E-3 & 2.95 & 8.919E-2 & 1.95 &2.935E-7&3.72 \\
	\hline
	\end{tabular}
	}
\end{center}
\end{table}

\textcolor{black}{To test the robustness of our method, \cref{fig:osillation} shows the numerical solutions on structured meshes of various element sizes using the first order method for the problem with $\epsilon=0.002$ in which case the boundary layer is extremely sharp. More precisely the mesh sizes are chosen such that the boundary layer are under resolved $(h = 1/64)$, half resolved $(h=1/512)$ and fully resolved ($h=1/1024$).  
We observe that when the mesh size could not resolve the boundary layer fully
global oscillations appear in the approximation solution and
jeopardize the quality of the approximations. Observe that the
oscillations here are different to those appearing in the standard
Galerkin method.} In \cref{sec:outflow}
we propose two simple strategies to tackle this issue. 

\begin{figure}[h!]
    \centering
    \includegraphics[width=0.6\textwidth]{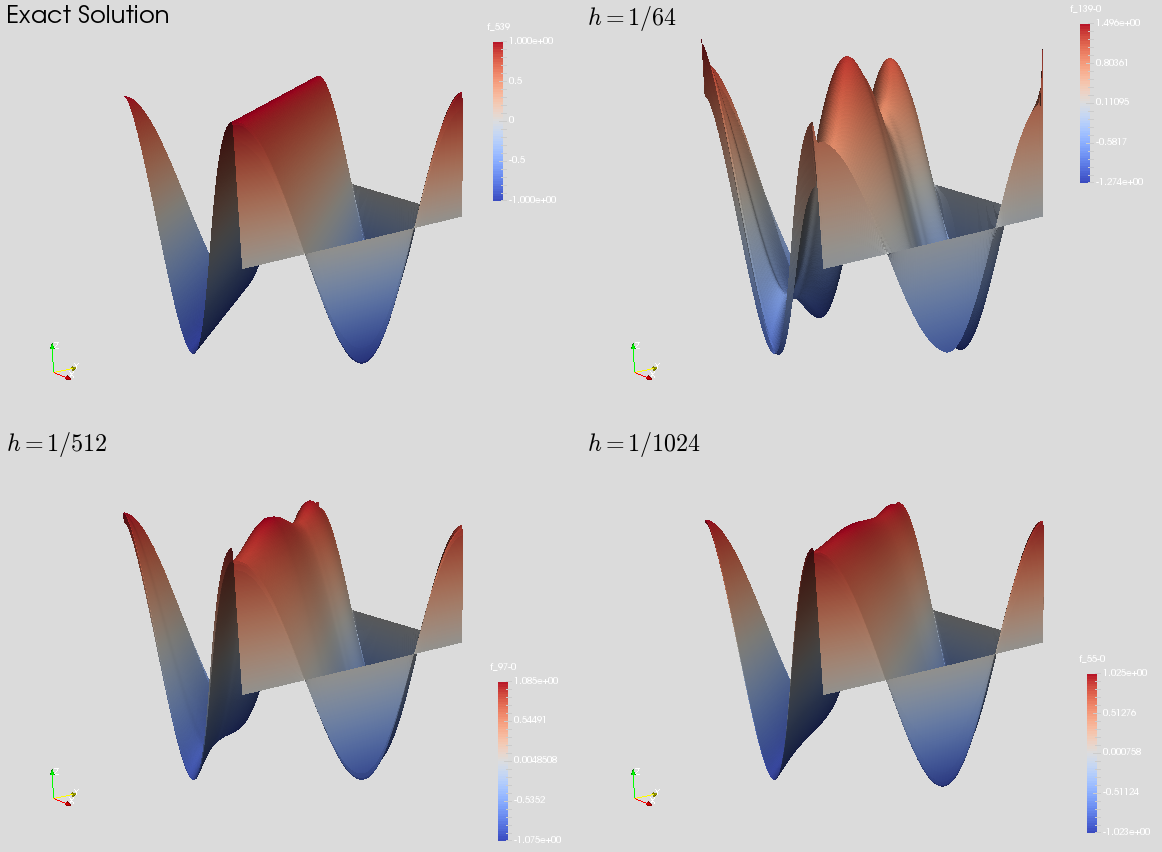}
    \caption{ Various numerical solutions for \cref{ex:1} with  $\epsilon=0.002$}
    \label{fig:osillation}
\end{figure}

\begin{example}[Reentrant Corner]\label{ex:2}
In this example we test a pure diffusion problem, i.e.,
 $\epsilon = 1$, $\bfbeta=0$ and $\mu=0$, on the L-shaped doman $\Omega = (-1,1)^2 \setminus (-1,-1)^2$.
We consider the problem with solution being
\[
	u(r,\theta) = r^{2/3}\sin(2\theta/3), \quad \theta \in [0, 3\pi/2],
\]
in polar coordinates.
 It is well 
known that the solution satisfies 
\[-\triangle u =0 \quad \mbox{in} \, \Omega\]
 and belongs to $H^{5/3 -\epsilon}(\Omega)$ for $\epsilon>0$ with the singularity located at the reentrant corner, i.e., $(0,0)$. The numerical scheme takes the pure Dirichlet boundary condition.
 \end{example}

\begin{table}[ht!]
\caption{ \footnotesize{Errors and convergence rates for \cref{ex:2} with order $=1$ }}
\label{tab:ex2}
\begin{center}
	\begin{tabular}{||c|c c|c c| cc ||}
	\hline
	h & $\|u - u_h\|$ & rate & $\|u-u_h\|_{H^1(\Omega)}$ & rate& $\|\bfp-\bfp_h\|$ & rate\\
	\hline
	1/16 & 3.025E-3 & 1.35 &7.790E-2&0.65&4.110E-2&0.67\\
	1/32 & 1.189E-3 & 1.35 &4.949E-2&0.65&2.589E-2&0.67\\
	1/64 & 4.689E-4 & 1.34 &3.315E-2&0.66&1.631E-2&0.67\\
	\hline
	\end{tabular}
\end{center}
\end{table}

The magnitude of errors and their corresponding convergence rates are presented in 
\cref{tab:ex2}. For this pure diffusion problem, where the solution
has singularity and with limited smoothness, we observe the optimal
convergence performance for both the primal and flux variables. The
flux variable is still a superior approximation of the fluxes, but
here only by a factor two.

\begin{example}[Internal Layer]\label{ex:3}
In this example we consider a pure advection problem
\cite[Section 5.2.3]{EG04}.
The solution has the following representation:
\begin{equation}\label{ex3}
u(x,y) = exp( - \sigma \rho(x,y)\mbox{arccos}\left(\dfrac{y+1}{\rho(x,y)}\right)\mbox{arctan}\left(\dfrac{\rho(x,y)-1.5}{\delta}\right)
\end{equation}
where $\sigma =0.1$, $\rho(x,y) = \sqrt{x^2+(y+1)^2}$. It is easy to verify that
\[
	\divvr \bfbeta =0, \quad \bfbeta \cdot \nabla u + \sigma u = 0 
\]
for $\bfbeta = \dfrac{1}{\rho(x,y)}(y+1,x)$,
and that the inflow boundary,
$\Gamma^- = \{ x \in \partial \Omega, \bfbeta(x) \cdot \bn < 0\}$, is 
$x = 0$ and $y=1$.
The finite element scheme we use for this problem is to find 
$u \in V_{g, \Gamma^{-}}^k$, $\bfp \in RT^k$, and $z_h \in X_h^k$ such that 
\cref{eq:EL_1} and \cref{eq:EL_2} hold.
\end{example}

We first test the case when $\delta =1$ to test the performance of our method on smooth problems
(see performance results in \cref{tab:ex3-a}). We further test the case for $\delta = 0.01$ in which case the solution has a sharp internal layer (see performance results in \cref{tab:ex3-b}).

\begin{table}[ht!] 
\caption{\footnotesize{Errors and convergence rates for \cref{ex:3} with $\delta = 1$ and order $=1$}}
\label{tab:ex3-a}
\begin{center}
	\begin{tabular}{||c|c c|c c| c c||}
	\hline
	h & $\|u - u_h\|$ & rate & $\|u-u\|_{H^1(\Omega)}$ & rate & 
	$\|\bfp-\bfp_h\|$ & rate\\
	\hline
	1/32 & 6.021E-5 & 2.04 & 8.591E-3 & 1.00 & 6.939E-5 & 2.03\\
	1/64 & 1.475E-5 & 2.03 & 4.281E-3 & 1.00 & 1.711E-5 & 2.02\\
	1/128& 3.638E-6 & 2.02 & 2.135E-3 & 1.00 & 4.235E-6 & 2.01\\
	\hline
		\hline
	h & $\|\divvr(\bfp-\bfp_h)\|$ & rate & $\|\divvr(\bfbeta(u-u_h))\|$ & rate & 
	$\| z_h\|$ & rate\\
	\hline
	1/32 & 6.021E-6 & 2.04 & 5.343E-3 & 1.00 & 1.084E-7 & 2.99\\
	1/64 & 1.475E-6 & 2.03 & 2.669E-3 & 1.00 & 1.360E-8 & 3.00\\
	1/128& 3.638E-7 & 2.02 & 1.333E-3 & 1.00 & 1.703E-9 & 3.00\\
	\hline
	\end{tabular}
\end{center}

\end{table}

\begin{table}[ht!] 
\caption{\footnotesize{Errors and convergence rates for \cref{ex:3} with $\delta =0.01$}}
\label{tab:ex3-b}
\begin{center}
\subfloat[order=1]{
	\begin{tabular}{||c|c c|c c| c c||}
	\hline
	h & $\|u - u_h\|$ & rate & $\|u-u\|_{H^1(\Omega)}$ & rate & 
	$\| \bfp-\bfp_h\|$ & rate\\
	\hline
	1/128& 2.616E-2 & 1.17 & 3.801E-0 & 0.64 & 2.615E-2 & 1.17\\
	1/256& 9.421E-3 & 1.47 & 2.012E-0 & 0.91 & 9.421E-3 & 1.47\\
	1/512& 2.515E-3 & 1.91 & 8.461E-1 & 1.25 & 2.515E-3 & 1.91\\
	\hline
	\hline
	h & $\|\divvr(\bfp-\bfp_h)\|$ & rate & $\|\divvr(\bfbeta(u-u_h))\|$ & rate & 
	$\| z_h\|$ & rate\\
	\hline
	1/128& 2.616E-3 & 1.17 & 3.435E-1 & 0.36 & 1.375E-6 & 2.25\\
	1/256& 9.421E-4 & 1.47 & 2.362E-1 & 0.54 & 2.367E-7 & 2.54\\
	1/512& 2.515E-4 & 1.91 & 1.423E-1 & 0.73 & 3.200E-8 & 2.89\\
	\hline
	\end{tabular}
	}
\end{center}

\begin{center}
\subfloat[order =2]{
	\begin{tabular}{||c|c c|c c| c c||}
 	\hline
	h & $\|u - u_h\|$ & rate & $\|u-u\|_{H^1(\Omega)}$ & rate & 
	$\| \bfp-\bfp_h\|$ & rate\\
	\hline
	1/128& 4.470E-3 & 1.77 & 1.123E-0 & 1.25 & 4.470E-3 & 1.77\\
	1/256& 8.402E-4 & 2.41 & 3.103E-1 & 1.86 & 8.402E-4 & 2.41\\
	1/512& 8.442E-5 & 3.31 & 5.018E-2 & 2.63 & 8.441E-5 & 3.32\\
	\hline
	\hline
	h & $\|\divvr(\bfp-\bfp_h)\|$ & rate & $\|\divvr(\bfbeta(u-u_h))\|$ & rate & 
	$\| z_h\|$ & rate\\
	\hline
	1/128& 4.470E-4 & 1.77 & 6.839E-2 & 1.06 & 8.267E-8 & 2.91\\
	1/256& 8.402E-5 & 2.41 & 2.610E-3 & 1.39 & 7.847E-9 & 3.39\\
	1/512& 8.442E-6 & 3.32 & 7.817E-3 & 1.74 & 5.088E-10& 3.94\\
	\hline
	\end{tabular}
	}
\end{center}

\end{table}

To test the robustness of our method for the pure convection problem, in \cref{fig:osillation-free_2} we show the numerical solutions for Example 3 with $k=1$ and upon $\delta = 0.001$ on 
structured meshes with various mesh sizes.
\textcolor{black}{We observe that, even for the highly sharp internal layer problem on relatively coarse meshes, the numerical solutions show no sighs of global spurious oscillation. When the mesh size does not resolve the layer only mild and localized oscillation presents around the internal layer.}

\begin{figure}[h!]   
    \centering
    \includegraphics[width=0.6\textwidth]{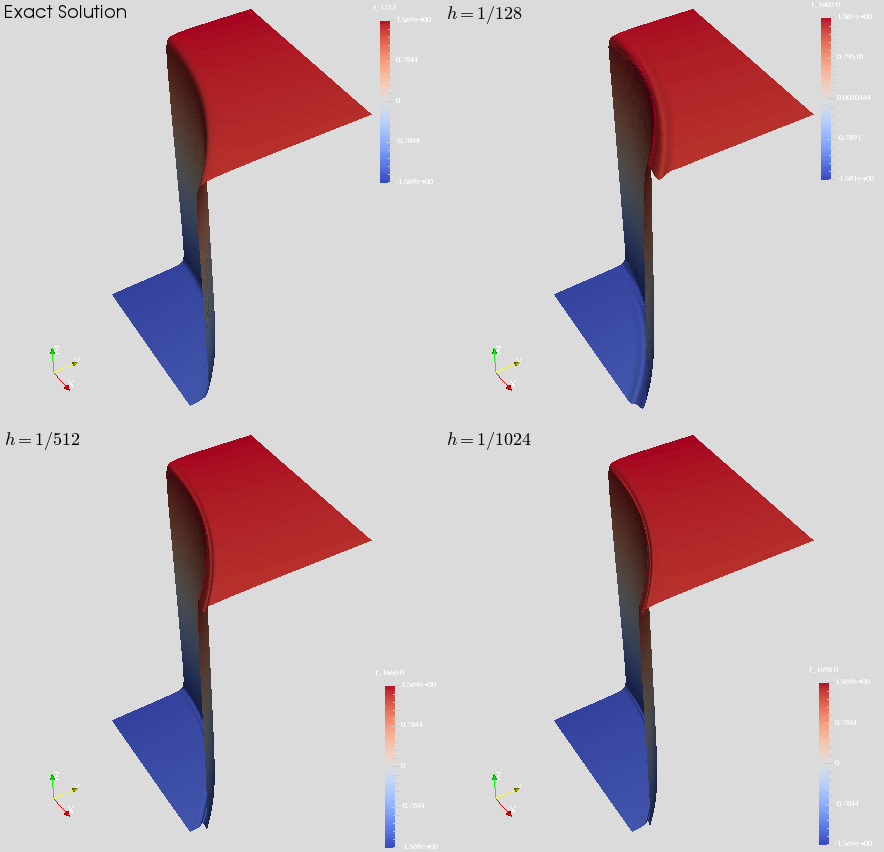}
    \caption{Various numerical solutions for \cref{ex:2} with  $\delta=0.001$}
     \label{fig:osillation-free_2}
\end{figure}

\section{Outflow Boundary Layers}
\label{sec:outflow}
From \cref{fig:osillation} we see that the current method does not
handle outflow boundary well because its lack of upstream
mechanism. In this section we propose two simple modifications of the
method based on the current setting that removes the global spurious
oscillation. More specifically, one method impose the boundary
condition weakly, whereas the other takes the approach of weighting
the stabilizer such that the oscillation is more ``costly" closer to
the inflow boundary, and, hence, introduces a notion of upwind
direction.
\subsection{Weakly imposed boundary conditions}
In this approach we weakly impose the Dirichlet boundary conditions,
giving different weight to the inflow and outflow boundary. 
The modified weak formulation is to find  $(u_h,\bfp_h,z_h) \in V_h^k \times
RT^k \times X_h^k$ such that
\begin{equation} \label{eq:weak boundary}
\mathcal{A}_1[(u_h,\bfp_h,z_h),(v_h,\bfq_h,x_h)] = l_h(x_h),\quad \forall \,
(v_h,\bfq_h,x_h) \in V_h^k \!\times\! RT^k \!\times\! X_h^k,
\end{equation}
where
\begin{equation*}
\begin{split}
\mathcal{A}_1[(u_h,\bfp_h,z_h),(v_h,\bfq_h,x_h)] &= b(\bfq_h, v_h,z_h)+
b(\bfp_h, u_h,x_h)+s[(u_h,\bfp_h),(v_h,\bfq_h)]\\
&+ \left<  (h [\beta \cdot \bfn]_-^2+ \gamma \epsilon^2/h) u, v
\right>_{\partial \Omega} 
\end{split}
\end{equation*}
and
\[
 l_h(x_h) = ( f, x_h)
 + \left<  (h [\beta \cdot \bfn]_-^2+ \gamma \epsilon^2/h) g, v
\right>_{\partial \Omega}.
\]
In the above formulation $[\beta \cdot n]_- = \min(0,\beta \cdot
n)$ and $\epsilon = \min(\lambda_A)$, i.e., the smallest eigenvalue of $A$.
\begin{remark}
Note that in the above method the Dirichlet  boundary condition is
enforced weakly everywhere. Alternatively one may impose the Dirichlet
conditions strongly on the inflow boundary. The outcome
turns out to be similar.
\end{remark}

\begin{figure}[h!]   
    \centering
    \includegraphics[width= 0.6\textwidth]{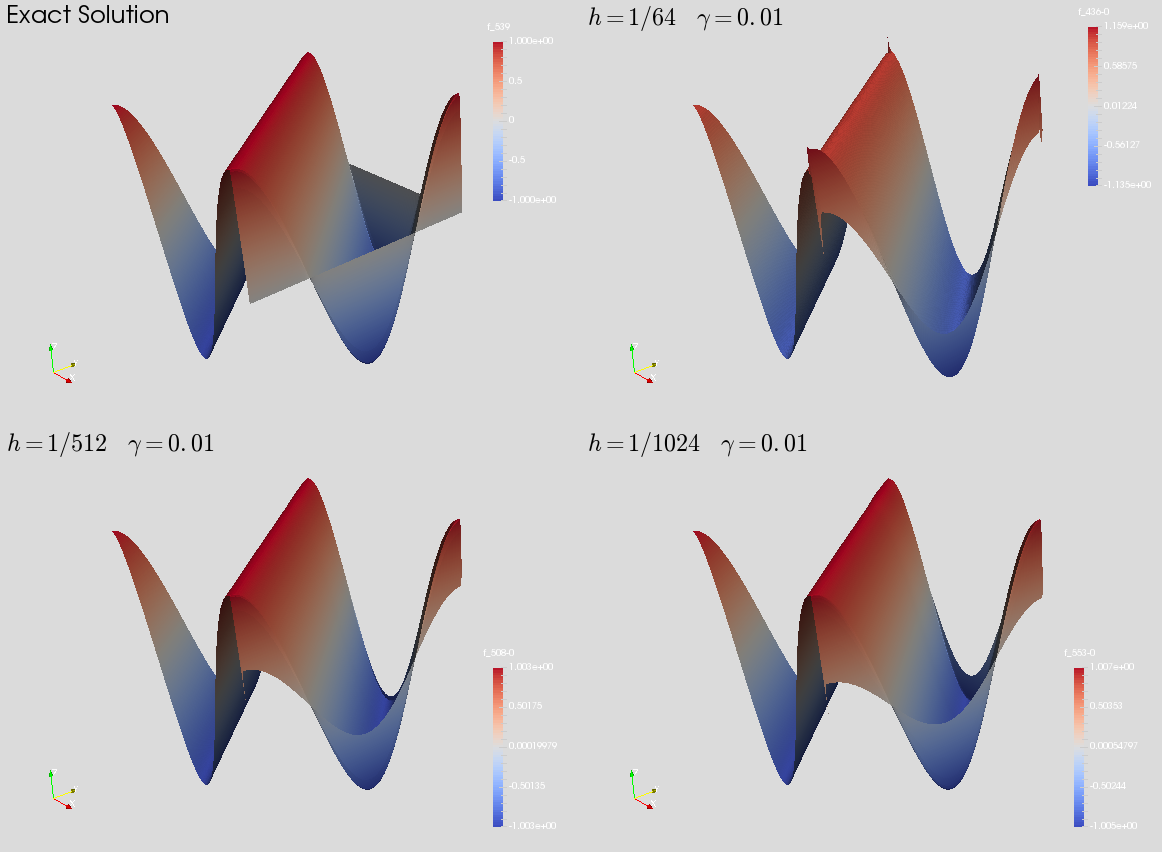}
    \caption{ \footnotesize{Various numerical solutions of \cref{ex:1} with weakly imposed Dirichlet condition}}
     \label{fig:osillation-free_weak_boundary}
\end{figure}

\Cref{fig:osillation-free_weak_boundary} shows the numerical solutions for \cref{ex:1} computed by 
using the method \cref{eq:weak boundary} on the same meshes in \cref{fig:osillation} for $\epsilon = 0.002$.
Comparing to \cref{fig:osillation} the spurious oscillation is
completely removed and only a mild local oscillation is observed for
the coarsest mesh on the outflow boundary layers.

We also test the method on a commonly used benchmark problem with an internal and outflow boundary layers \cite{Burman:2005aa}.
\begin{example} \label{ex:4}
Let $u$ be the solution that satisfies
\begin{equation*}
\begin{split}
	\nabla \cdot ( \bfbeta u - \epsilon \nabla u )  =0& \quad \mbox{on} \,\Omega, \\
	u =1 & \quad \mbox{on} \,\Gamma_{L},\\
	u =0 & \quad \mbox{on} \, \partial \Omega \setminus \Gamma_L\\
\end{split}
\end{equation*}
where $\Omega$ is the unit square domain $(0,1)^2$, $\beta = (1,-0.5)$ and $\Gamma_L$ is the left boundary of the square, i.e., $x=0$. $\epsilon$ is the diffusion coefficient and in out test we choose $\epsilon = 0.001$ in which case the internal and boundary layers are very sharp.

\end{example} 

\textcolor{black}{In \cref{fig:outflow_weak_boundary} we compare the results between the original method (see figures on the top) and the method of 
\cref{eq:weak boundary} (see figures at the bottom). 
We observe that the weak boundary condition method results in an
accurate solution in the bulk, with unresolved layers, that are
resolved as the mesh-size is small enough, whereas the approximation with strongly
imposed conditions has a globally large error.} 


\begin{figure}
\centering
\subfloat[]{\includegraphics[width=0.3\textwidth]{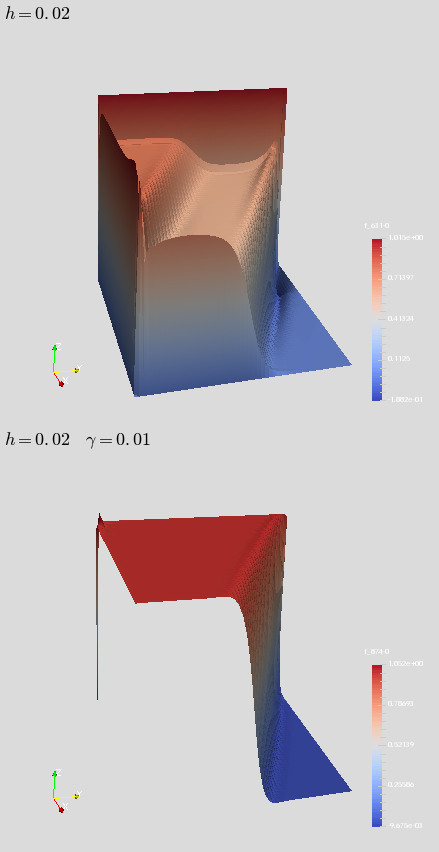}} 
\subfloat[]{\includegraphics[width=0.3\textwidth]{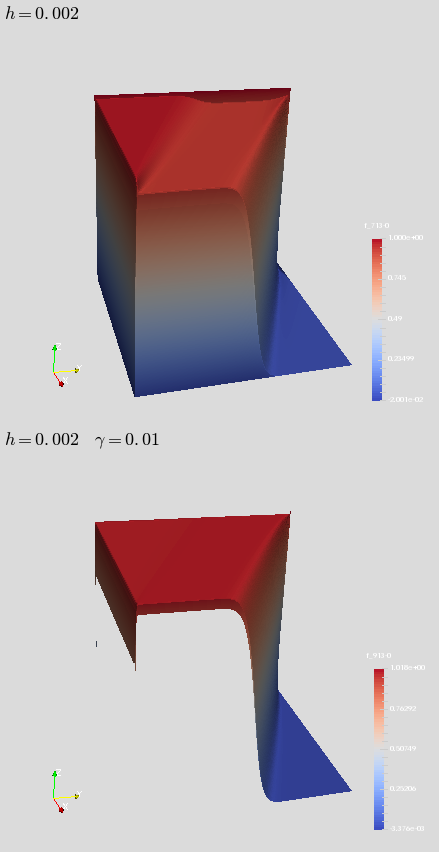}}
\subfloat[]{\includegraphics[width=0.3\textwidth]{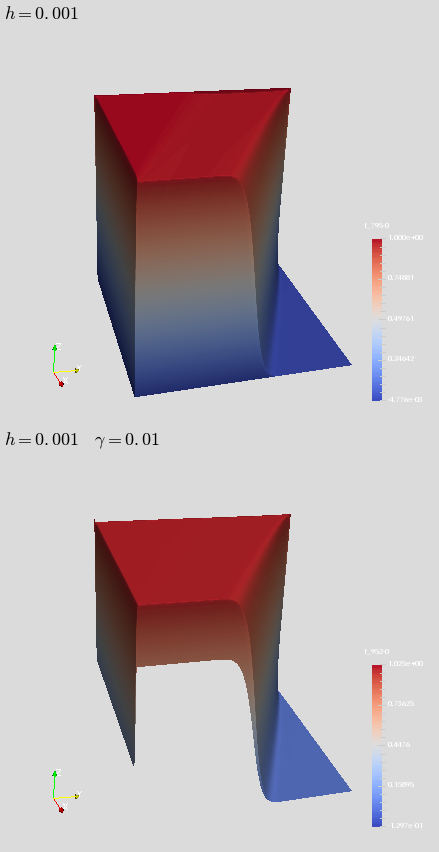}}
\caption{
Numerical performance of the weak boundary method for \cref{ex:4}.} 
\label{fig:outflow_weak_boundary}
\end{figure}

\subsection{Weighted stabilization method}
In this subsection we propose a method where a weight function is
introduced in the stabilizing term $s$.
The motivation here is to change the stabilization making oscillations more
``costly'' closer to the inflow boundary, this way introducing a
notion of upwind direction. More precisely, we introduce a weight function
$\eta: \Omega \rightarrow \mathbb{R}$ such that
\begin{equation}\label{eta}
	\beta > 0 \quad \mbox{and} \quad \bfbeta \cdot (\nabla \eta)< 0.
\end{equation}
For \cref{ex:3} we choose
\[
	\eta = 3 - \bfbeta \cdot (x, y)
\]
and 
for \cref{ex:4} we choose
\[
	\eta = 2 - \bfbeta \cdot (x, y).
\]
It is easy to check that  \cref{eta} holds for both problems. We then
introduce $\eta^p$, for some $p>0$ to be specified, as a weight in $s$.

The finite element setting is then 
to find  $(u_h,\bfp_h,z_h) \in V_{g,D}^k \times
RT^k \times X_h^k$ such that
\begin{equation} \label{eq:eta}
\mathcal{A}_2[(u_h,\bfp_h,z_h),(v_h,\bfq_h,x_h)] = l_h(x_h),\quad  \forall \,
(v_h,\bfq_h,x_h) \in V^k \!\times\! RT^k \!\times\! X_h^k,
\end{equation}
where
\begin{equation*}
\begin{split}
\mathcal{A}_2[(u_h,\bfp_h,z_h),(v_h,\bfq_h,x_h)] &= b(\bfq_h, v_h,z_h)+
b(\bfp_h, u_h,x_h)+ s_\eta[(u_h,\bfp_h),(v_h,\bfq_h)],
\end{split}
\end{equation*}
\[
	s_\eta[(u_h,\bfp_h),(v_h,\bfq_h)] = \left(\eta^p (\bfp + A \nabla u - \bfbeta u),
	(\bfq + A \nabla v - \bfbeta v)\right)
\]
and
\[
 l_h(x_h) = ( f, x_h).
 \]

 \begin{figure}[h!]   
    \centering
    \includegraphics[width=0.6\textwidth]{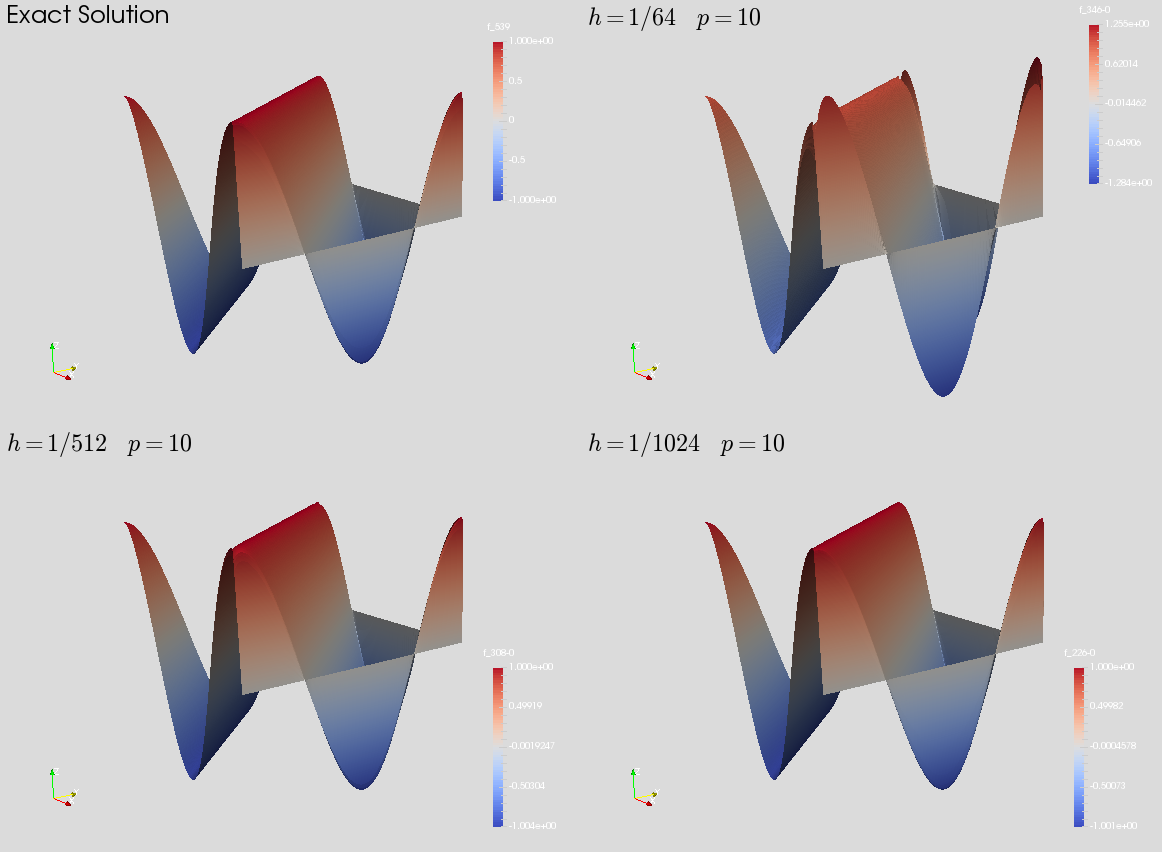}
    \caption{Various numerical solutions with weighted stabilization method for \cref{ex:1}}
     \label{fig:osillation-free_eta}
\end{figure}

 \Cref{fig:osillation-free_eta} shows the numerical solutions for \cref{ex:1} computed by using the method of \cref{eq:eta} on the same meshes in \cref{fig:osillation} for $\epsilon = 0.002$. 
Comparing to \cref{fig:osillation} we observe that the global spurious oscillation has been 
eliminated even for very coarse mesh. Local oscillations along the
outflow boundary does appear when the layer is not completely resolved. 
We also test this method for \cref{ex:4} (see results in \cref{fig:osillation-free_eta1}). 

\begin{figure}
\centering
\subfloat[]{\includegraphics[width=0.25\textwidth]{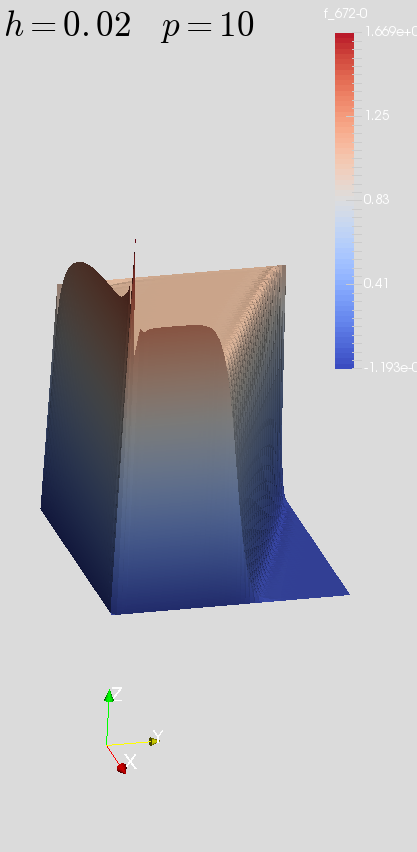}} 
\subfloat[]{\includegraphics[width=0.25\textwidth]{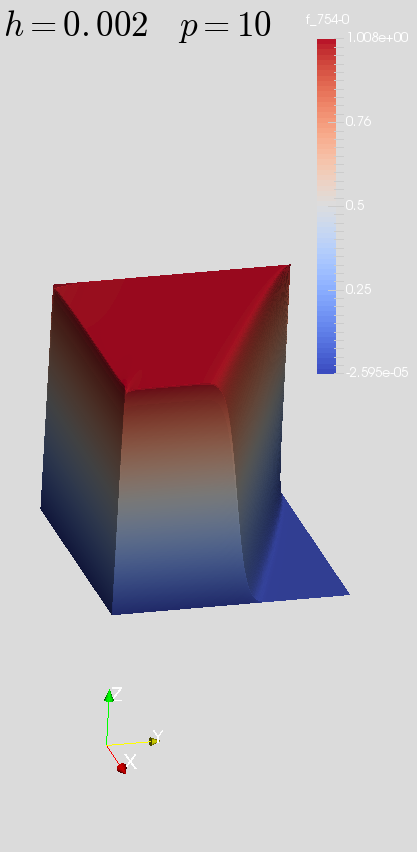}}
\subfloat[]{\includegraphics[width=0.25\textwidth]{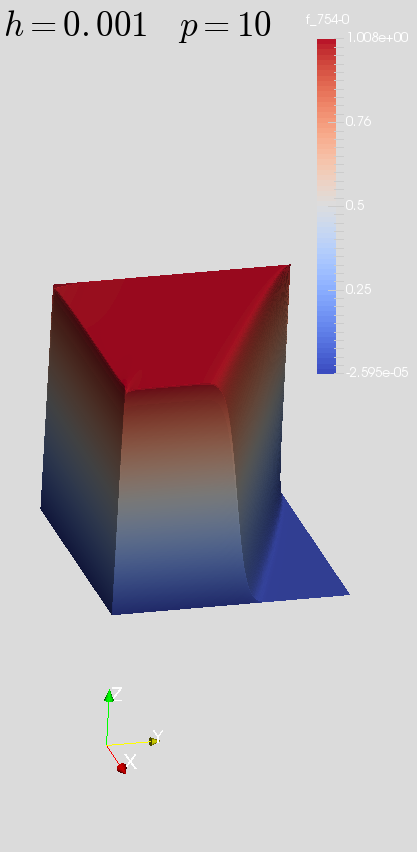}}
\caption{Numerical performance of the weighted stabilization method.} 
\label{fig:osillation-free_eta1} 
\end{figure}

%


\clearpage 

\bibliographystyle{siamplain}
\bibliography{primal-dual-bib}
\end{document}